\documentclass[12pt,a4paper]{amsart}
\usepackage{mathtools}
\usepackage{amssymb}
\usepackage{amsthm}
\usepackage{enumitem}
\usepackage{amsmath}
\usepackage{url}
\usepackage{graphicx} 
\usepackage{subcaption}
\usepackage{hyperref}
\usepackage{comment}
\usepackage{cite}
\usepackage{ytableau}
\usepackage{algpseudocode}
\usepackage{algorithm}
\DeclarePairedDelimiter{\gen}{\langle}{\rangle}

\newtheorem{theorem}{Theorem}
\newtheorem{corollary}[theorem]{Corollary}
\newtheorem{lemma}[theorem]{Lemma}
\theoremstyle{definition}

\theoremstyle{remark}

\theoremstyle{definition}
\newtheorem{proposition}[theorem]{Proposition}
\theoremstyle{definition}

\theoremstyle{definition}

\theoremstyle{definition}

\usepackage{xcolor}

\title{The connectivity of the normalising and permuting graph of a finite soluble group}
\author{Eoghan Farrell}
\author{Chris Parker}
\address{Eoghan Farrell \\
School of Mathematics\\
University of Birmingham\\
Edgbaston\\
Birmingham B15 2TT\\
United Kingdom}\email{exf051@student.bham.ac.uk}
\address{Chris Parker\\
School of Mathematics\\
University of Birmingham\\
Edgbaston\\
Birmingham B15 2TT\\
United Kingdom}  \email{c.w.parker@bham.ac.uk}

 \begin{document}
\maketitle

\maketitle \pagestyle{myheadings}

\markright{{\sc Normalising and permuting graphs }}   \markleft{{\sc Eoghan Farrell and Chris Parker}}

\begin{abstract}
We introduce the normalising graph of a group and study the connectivity of the normalising and permuting graphs of a group when the group is finite and soluble. In particular, we classify finite soluble groups with disconnected normalising graph. The main results shows that if a finite soluble group has connected normalising graph then this graph has diameter at most \(6\). Furthermore, this bound is tight.  A corollary then presents the connectivity properties of the permuting graph.
\end{abstract}

\section{Introduction}

In recent years, there has been a spike of interest in problems related to graphs which are defined on groups. For a group \(G\), these graphs have vertex set consisting of some subset of \(G\) and two vertices form an edge if they satisfy a particular group theoretic property. In \cite{MR4346241}, Cameron describes a myriad of ways in which one can define such a graph. Cameron also portrays a hierarchy for the collection of graphs defined on a group. Here we investigate two such graphs: the normalising graph and the permuting graph.  These graphs are not included in the hierarchy given in \cite{MR4346241}. The \emph{normalising graph} of \(G\), $\Gamma(G)$,   has vertex set consisting of the non-trivial elements of $G$ and there is an edge between two distinct vertices if and only if one of the cyclic subgroups they generate normalises the other.  The \emph{permuting graph} of \(G\), $\Psi(G)$, also has vertex consisting of non-trivial elements of  \(G\) and there is an edge between two vertices in this graph if the cyclic subgroups they generate permute, or equivalently, their product is a subgroup. We recall that the commuting graph of $G$, $\mathrm K(G)$, also has vertex set the non-trivial elements of $G$, and two vertices are adjacent if and only if they commute. Thus the edge set of $\mathrm K(G)$ is contained in the edge set of $\Gamma(G)$ which itself is contained in the edge set of $\Psi(G)$. We mention here the \emph{soluble graph} $\Sigma(G)$ in which $x$ and $y$ form an edge if and only if $\langle x,y\rangle$ is soluble (see \cite{MR4513813}).  By Ito's theorem \cite{MR71426}, if $x$ and $y$ are joined in $\Psi(G)$, then $\langle x,y\rangle$ is metacyclic and so soluble.  Hence the edge set of $\Psi(G)$ is contained in the edge set of $\Sigma(G)$. Of course, we could also define the \emph{metacyclic graph}. In this way, we obtain an alternative hierarchy for a collection of graphs defined on groups.   The permuting graph and the normalising graph have not been studied as extensively as the commuting graph.
 In \cite{MR4513813} it was shown that the diameter of the soluble graph of a finite group is at most \(5\).
  Following \cite{MR2371966}, we define the \emph{Engel graph}, $\mathcal E(G)$, to have vertices the non-trivial elements of \(G\) and we join two vertices \( x\) and \(y\) if and only if \([\langle y\rangle, \langle x\rangle;n]=1\) or \([ \langle x\rangle,\langle y\rangle ;n]=1\) for some natural number $n$. The connectivity of $\mathcal E(G)$ is studied in \cite{MR4529423}  and   \cite[Theorem 1.3 ]{MR4529423} states  that if \(G\) is soluble and \(G/Z_{\infty}(G)\) is not Frobenius, then   \(\mathcal E(G)\) has diameter at most \(4\). We remark that the normalising graph is a subgraph of  $\mathcal E(G)$. In \cite{MR3081551},   Parker showed that the diameter of the commuting graph of a soluble group with trivial centre is at most \(8\). To the best of our knowledge, the connectivity of the normalising graph  has not been investigated.
 Our main theorem is

\begin{theorem}\label{Diam 6}
    Suppose that \(G\) is a finite soluble group and that  $\Gamma $ is its normalising graph.
    \begin{enumerate} \item[(i)] The graph $\Gamma $ is disconnected if and only if $G=KC$  is a   Frobenius group with kernel $K$ and complement $C$ such that for all primes  $p$ dividing $|C|$ and all primes $r$ dividing $|K|$, we have  \(p $ does not divide \(r-1\). Furthermore, the connected components of $\Gamma $ have diameter at most $2$.
    \item [(ii)]
  If $\Gamma $ is connected, then the diameter of $\Gamma$ is at most $6$.
    \end{enumerate}
    Furthermore, there exist finite soluble groups with normalising graph of diameter $6$.
\end{theorem}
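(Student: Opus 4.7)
The plan is to build both arguments around the Fitting subgroup $F = F(G)$, exploiting that $F$ is nilpotent and, by solubility, $C_G(F)\leq F$. Two standing observations drive the analysis. First, $Z(F)$ is non-trivial and every $z\in Z(F)^*$ is a universal $\Gamma$-neighbour of $F^*$, so the induced subgraph on $F^*$ has diameter at most $2$. Second, two non-identity elements of the same cyclic group are automatically $\Gamma$-adjacent, so one may freely replace an element by any of its non-trivial powers when chasing paths.

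\textbf{Part (i).} For the ``if'' direction, let $G=KC$ be Frobenius with the stated arithmetic condition. For $x\in K^*$ and $y\in G\setminus K$, any $\Gamma$-edge $\{x,y\}$ forces $\langle x\rangle\trianglelefteq\langle x,y\rangle$ or the reverse, so $y$ or $x$ induces an automorphism of the cyclic group generated by the other. The coprimality $\gcd(|K|,|C|)=1$ together with $p\nmid r-1$ rules out every non-trivial such automorphism, while fixed-point-freeness of the Frobenius complement rules out the trivial action; hence no edge crosses, and $K^*$ and $G\setminus K$ lie in distinct components. Applying the first observation inside $K$ and inside a single complement (with $G$-conjugacy of complements and the fact that each element of $G\setminus K$ lies in a unique conjugate of $C$) yields diameter at most $2$ in each component. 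For the converse, assume $\Gamma$ is disconnected; the first observation places $F^*$ in a single component. An element $x\in G\setminus F$ in a different component must satisfy $\langle x\rangle\cap F=1$ and $C_F(x)=1$ (else some non-trivial power of $x$ in $F$ yields an edge) and cannot normalise any non-trivial cyclic subgroup of $F$; combining with solubility and induction on $|G|$ I extract a Frobenius decomposition $G=KC$, and a Maschke-style analysis of the action of $\langle x\rangle$ on the elementary abelian sections of $F$ recovers the arithmetic condition $p\nmid r-1$.

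\textbf{Part (ii).} Assume $\Gamma$ is connected. The plan is a meet-in-the-middle argument through $F^*$: prove the bound $\operatorname{dist}_\Gamma(v,F^*)\leq 2$ for every $v\in G^*$, then combine with the first observation to obtain $\operatorname{dist}(x,y)\leq 2+2+2=6$. The bound $\operatorname{dist}_\Gamma(v,F^*)\leq 2$ is immediate when $C_F(v)\neq 1$ or $\langle v\rangle$ normalises a non-trivial cyclic subgroup of $F$. In the remaining case, $\langle v\rangle F$ is a Frobenius group with complement $\langle v\rangle$; either some power of $v$ normalises a non-trivial cyclic subgroup of $F$ (so the second observation gives a length-$2$ path), or the arithmetic condition of part~(i) holds inside $\langle v\rangle F$. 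In the latter situation $\Gamma(\langle v\rangle F)$ is disconnected, so (since $\Gamma(G)$ is connected) $G$ properly contains $\langle v\rangle F$, and I would produce an intermediate vertex from $G\setminus\langle v\rangle F$ that $\Gamma$-neighbours both $v$ and some non-trivial element of $F$, exploiting an element of $N_G(\langle v\rangle)\setminus\langle v\rangle$ or an overgroup of $\langle v\rangle$ that meets $F$ non-trivially.

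\textbf{Main obstacle and tightness.} The critical difficulty is the last subcase of part~(ii): the Frobenius structure of $\langle v\rangle F$ persists, so one must genuinely use the connectedness of the ambient $\Gamma(G)$ to produce a bridge from outside $\langle v\rangle F$. The case split depends on the soluble structure of $G/F$ and likely proceeds by induction on $|G/F|$ or on the chief factor series of $G$, with separate treatment of whether $\langle v\rangle$ is self-normalising in $G$. For tightness, I would construct an iterated soluble extension of two Frobenius-like layers over distinct primes, glued through an intermediate section chosen so that every candidate short path fails the normalising condition at some step; a small explicit example of this shape should confirm diameter exactly~$6$ by direct enumeration of paths of length at most $5$.
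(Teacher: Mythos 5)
Your part (ii) strategy rests on the claim that $\mathrm{d}_\Gamma(v,F^\#)\le 2$ for every $v\in G^\#$, which would give the meet-in-the-middle bound $2+2+2=6$. This claim is false, and it is refuted by the paper's own tightness example: there $G=N\rtimes H$ with $N$ elementary abelian of order $5^6$, $H$ of order $36$ acting faithfully, so $F(G)=N$, and the element $x$ of order $9$ satisfies $\mathrm{d}_\Gamma(x,N^\#)=3$. Indeed every element of order $3$ in $G$ acts fixed-point-freely on $N$ and $3\nmid 5-1$, so no neighbour of $x$ is adjacent to any non-trivial element of $N$; your proposed ``bridge'' vertex in $G\setminus\langle v\rangle F$ adjacent to both $v$ and $F^\#$ simply does not exist in this group. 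The correct bound (the paper's Theorem~\ref{norm distance}) is $\mathrm{d}(x,N)\le 3$ to a minimal normal subgroup $N$ unless $G$ is Frobenius, and the naive concatenation through $N$ then only yields a path of length $3+1+3=7$. The entire substance of the paper's proof of part (ii) is the elimination of the distance-$7$ case: one passes to $K$, the preimage of $O_t(G/F)$, takes $T\in\mathrm{Syl}_t(K)$ with $H=N_G(T)$, and plays off whether $Z(T)$ is cyclic (which yields a shortcut through $Z(T)$) against Lemmas~\ref{Aschbacher} and \ref{cyclic subgroups} (which, when $Z(T)$ is non-cyclic, produce $u\in Z(T)^\#$ centralising both a suitable power of $x$ and a non-trivial element of $N$). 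Your proposal contains no mechanism playing this role, so the gap is not a matter of missing detail but of a wrong structural claim at the heart of the argument.

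On part (i), the ``if'' direction (no edge crosses between the kernel and the complements when $p\nmid r-1$) is essentially right and matches the paper. But the converse as you sketch it is incomplete in two places. First, ``disconnected implies Frobenius'' is exactly the hard content of Theorem~\ref{norm distance}; invoking ``solubility and induction on $|G|$'' does not engage with the actual obstruction, which is that an element far from $N$ forces $C_G(y)N$ and then $N_G(\langle y\rangle)F$ to be Frobenius via fixed-point-free actions and Lemma~\ref{Thompson}. Second, for a Frobenius group with $p\mid r-1$ for some $p\in\pi(C)$, $r\in\pi(K)$, connectivity is not a ``Maschke-style'' consequence: the paper needs the counting argument that the number of non-trivial cyclic subgroups of a minimal $K\langle c\rangle$-submodule $V$ of order $r^m$ is $\equiv m\pmod p$, forcing $m=p$ and producing the $\langle c\rangle$-fixed vector $vv^c\cdots v^{c^{p-1}}$. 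Neither of these steps is recoverable from the outline you give.
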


   In \cite{MR3493269},   Rajkumar and  Devi   study forbidden subgraphs in the permuting graph. They then pose the problem of determining the connectivity and bounding the diameter of the permuting graph \cite[Problem 4.1]{MR3493269}.  Here we resolve this problem  for soluble groups.

\begin{corollary}\label{cor:permuting graphs} Suppose that $G$ is a finite soluble group. Then the permuting graph $\Psi(G)$ is connected if and only if the normalising graph $\Gamma(G)$ is connected. Furthermore, if $\Psi(G)$ is connected, then it has diameter at most $6$ and this bound is attained.\end{corollary}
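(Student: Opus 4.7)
The plan is to use that every edge of $\Gamma(G)$ is an edge of $\Psi(G)$: if $\langle x\rangle$ normalises $\langle y\rangle$ then $\langle x\rangle\langle y\rangle=\langle y\rangle\langle x\rangle$ is a subgroup. Thus $\Gamma(G)$ is a spanning subgraph of $\Psi(G)$. This immediately gives one direction of both the biconditional and the diameter bound: if $\Gamma(G)$ is connected then so is $\Psi(G)$, and in that case $\mathrm{diam}(\Psi(G))\leq \mathrm{diam}(\Gamma(G))\leq 6$ by Theorem~\ref{Diam 6}(ii).

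For the reverse implication, assume $\Gamma(G)$ is disconnected. By Theorem~\ref{Diam 6}(i), $G=KC$ is a Frobenius group with kernel $K$ and complement $C$ satisfying $p\nmid r-1$ for every prime $p\mid |C|$ and $r\mid |K|$. I would show that $\Psi(G)$ has no edge joining $K^{\#}$ to $G\setminus K$, which forces disconnectedness. Assume for contradiction that $x\in K^{\#}$, $y\in G\setminus K$, and $H:=\langle x\rangle\langle y\rangle$ is a subgroup. Since $y$ lies in some conjugate of $C$, the Frobenius property gives $\langle y\rangle\cap K=1$, hence $\langle x\rangle\cap\langle y\rangle=1$ and $|H|=|x|\cdot|y|$. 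The natural map $H\to G/K$ kills $x$ and sends $y$ to an element of order $|y|$, so $H\cap K=\langle x\rangle$; in particular $\langle x\rangle\trianglelefteq H$. Fix a prime $r$ dividing $|x|$ and let $R$ be the unique subgroup of $\langle x\rangle$ of order $r$. Then $R$ is characteristic in $\langle x\rangle$, hence normal in $H$, and $y$ acts on $R$ by conjugation. This action is non-trivial, or else $y$ would centralise a non-identity element of $K$. A prime $p$ dividing the order of the induced automorphism therefore satisfies $p\mid |y|$ (so $p\mid |C|$) and $p\mid |\mathrm{Aut}(R)|=r-1$, contradicting the hypothesis.

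For the attainment of diameter $6$, the inclusion $\Gamma(G)\subseteq\Psi(G)$ immediately gives $\mathrm{diam}(\Psi(G))\leq 6$, but the extra edges in $\Psi(G)$ could in principle shorten distances. I would therefore revisit the witnessing example(s) constructed for Theorem~\ref{Diam 6} and verify directly that the pair realising $\Gamma$-distance $6$ is still at $\Psi$-distance $6$. This reduces to checking, for each candidate intermediate element $z$, that $\langle z\rangle$ fails to permute with both endpoints simultaneously or with a well-placed neighbour on a potentially shorter geodesic. I expect this to be the main obstacle, since the permuting condition is genuinely weaker than the normalising one; however, the verification should be a structural case analysis guided by the Fitting and Sylow decomposition of the witnessing group, rather than requiring new techniques beyond those developed for Theorem~\ref{Diam 6}.
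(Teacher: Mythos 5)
Your proposal is correct, and its overall architecture (the easy direction via edge containment, the reverse direction by showing a Frobenius group satisfying the arithmetic condition has no $\Psi$-edge leaving $K^{\#}$, and attainment deferred to a witnessing example) matches the paper. The one genuine difference is in how the reverse direction reaches its contradiction. The paper argues: if $\Psi(G)$ were connected there would be cyclic subgroups $A=\langle a\rangle$ in a complement and $B=\langle b\rangle\le K$ with $AB$ a subgroup; then Dedekind's law gives $AB\cap K=(A\cap K)B=B$, so $A$ normalises $B$ and the $\Psi$-edge is already a $\Gamma$-edge, contradicting Lemma~\ref{connected components}. You in fact reprove the same key fact --- your order count giving $H\cap K=\langle x\rangle\trianglelefteq H$ is exactly the Dedekind step --- but instead of stopping there and citing the component structure of $\Gamma(G)$, you push on to contradict the arithmetic condition $p\nmid r-1$ via the action of $\langle y\rangle$ on the subgroup $R$ of order $r$, essentially re-running the ``only if'' direction of Proposition~\ref{disconnected}. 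Both are valid; the paper's version is shorter and isolates the useful observation that every $\Psi$-edge from a complement element to a kernel element is a $\Gamma$-edge, while yours is more self-contained in that it needs only the statement of Theorem~\ref{Diam 6}(i) and not the components lemma. On attainment: your caution that the extra edges of $\Psi$ could shorten distances is exactly right, and your plan to verify the witnessing example directly for the permuting graph is what the paper does --- its final section analyses $\Psi(G)$ (not $\Gamma(G)$) for a specific group of order $5^6\cdot 3^2\cdot 2^2$, with part of the check done in {\sc Magma}; so that part of your argument is a correct strategy but, as you acknowledge, not yet a proof without the example in hand.
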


\begin{proof} If $\Psi(G)$ is disconnected, then so is $\Gamma(G)$.
Assume that $\Gamma(G)$ is disconnected and that $\Psi(G)$ is connected.  Then $G$ is a Frobenius group by Theorem~\ref{Diam 6}. Write $G=KC$ where $K$ is the Frobenius kernel and $C$  a complement.  By
Lemma~\ref{connected components}, the connected components of $\Gamma(G)$ are $\Gamma(K)$ and $\Gamma(C^k)$, $k \in K$.  Hence, as $\Psi(G)$ is connected there exist non-trivial cyclic subgroups $A=\langle a\rangle$ of $C$ (or a conjugate) and  $B=\langle b \rangle$ of $K$ such that $AB$ is a subgroup. Since $AB \cap K=(A\cap K)B =B$ and $K$ is normal in $G$, we have $A$ normalises $B$. Hence $a$ and $b$ are joined in $\Gamma(G)$, which is a contradiction. Hence $\Psi(G)$ is disconnected.  This proves that $\Gamma(G)$ is connected if and only if $\Psi(G)$ is connected.   Finally,  the example in this article's final section exhibits a group $G$ with $\Psi(G)$ of diameter $6$.
\end{proof}

We follow the group theoretic notation introduced in \cite{MR0569209}. Throughout this article, we only consider finite soluble groups. If \(G\) is a group, then \(G^{\#}\) is the set of non-trivial elements in \(G\). If $X \subseteq G$, then $X^\# = X \cap G^\#$.  The set of primes which divide the order of \(G\) is denoted by \(\pi(G)\). For   \(x\in G\), the order of \(x\) is given by \(o(x)\).
The Fitting group of \(G\), denoted by \(F(G)\), is defined to be the maximal normal nilpotent subgroup of \(G\). If \(p\) is a prime, \( O_{p}(G)\) is the maximal normal \(p\)-subgroup of \(G\). If \(G\) is a \(p\)-group, \(\Omega_{1}(G)\) is the subgroup generated by all elements in \(G\) whose order divides \(p\).
Recall that a graph, \(\Gamma\), is connected if there is a path between any pair of distinct vertices in \(V(\Gamma)\). For distinct \(x,y\in V(\Gamma)\), we write \(x\sim y\) to denote the fact that \(x\) and \(y\) share an edge in \(\Gamma
\).  The distance function on  \(\Gamma\),  \(\mathrm{d}(x,y)\), is the length of the shortest path between the vertices \(x\) and \(y\) if such a path exists and otherwise   \(\mathrm{d}(x,y)=\infty\). The diameter of \(\Gamma\), denoted by \(\mathrm {diam}(\Gamma)\), is given by \(\mathrm {diam}(\Gamma)=\text{max} \{ \mathrm{d}(x,y)\mid x,y\in V(\Gamma)\}\). For \(H\subseteq V( \Gamma)\) and $x\in V(\Gamma)$, define \(\mathrm{d}(x,H)\) to be the length of a shortest path from \(x\) to a vertex of  \(H\). Equivalently, \(\mathrm{d}(x,H)=\min\{\mathrm{d}(x,h)\mid h \in H\}\).  We write \(x\sim H\) if $\mathrm{d}(x,H)=1$.

\renewcommand{\abstractname}{Acknowledgements}
\begin{abstract}
 The first author was supported by an EPSRC Undergraduate Vacation Internship which was administered by the University of Birmingham. Both authors thank the referee for a constructive report which led to an improvement of the readability of the paper and increased the clarity and precision of the proofs which lead to the main results.
\end{abstract}

\section{Preliminaries}
Before beginning the proof of Theorem \ref{Diam 6}, we introduce some group theoretical results that we use throughout.
 We begin with the following lemma about subgroups of \(p\)-groups.
\begin{lemma}\label{no non cyclic subgroup in P group}
    Let \(p\) be a prime and suppose that \(P\) is a \(p\)-group with no non-cyclic abelian subgroups. Then either \(P\) is cyclic or \(p=2\) and \(P\)  is generalised quaternion.
\end{lemma}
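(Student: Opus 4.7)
The plan is to reduce the statement to a classical theorem in the theory of finite $p$-groups, namely that a finite $p$-group possessing a unique subgroup of order $p$ is cyclic if $p$ is odd, and is either cyclic or generalised quaternion if $p=2$. Granting that result, it suffices to prove that under the hypothesis of the lemma the group $P$ contains exactly one subgroup of order $p$; applying the classical theorem then yields the conclusion immediately.

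To establish the uniqueness, I would assume $P \neq 1$ (otherwise $P$ is trivially cyclic) and exploit the fact that the centre $Z(P)$ is non-trivial, so it contains an element $z$ with $o(z)=p$. Suppose, for contradiction, that $P$ contains a subgroup $\langle x\rangle$ of order $p$ with $\langle x\rangle \neq \langle z\rangle$. Since $\langle z\rangle$ and $\langle x\rangle$ are distinct subgroups of prime order $p$, they intersect trivially. As $z$ is central, $x$ and $z$ commute, and hence $\langle z,x\rangle = \langle z\rangle \times \langle x\rangle$ is an elementary abelian subgroup of order $p^{2}$. This is a non-cyclic abelian subgroup of $P$, contradicting the hypothesis on $P$. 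Therefore $P$ has a unique subgroup of order $p$, and an appeal to the classical classification completes the proof.

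The main ``obstacle'' here is really just bookkeeping: once the unique-subgroup-of-order-$p$ property has been extracted from the hypothesis, the conclusion is a standard textbook theorem on $p$-groups and no further delicate argument is needed. The only genuine content is the observation that a second subgroup of order $p$ would, together with a central one, span an elementary abelian subgroup of rank $2$, so the hypothesis really does collapse to the much more familiar condition treated by the classical result.
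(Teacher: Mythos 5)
Your argument is correct: the bridge from ``no non-cyclic abelian subgroups'' to ``unique subgroup of order $p$'' via a central element of order $p$ is exactly the standard reduction, and the classical theorem you then invoke is precisely the result the paper cites (Gorenstein, Theorem 5.4.10(ii)), since for odd $p$ no generalised quaternion group exists. The paper offers no argument beyond that citation, so your proposal matches its approach while supplying the small missing step explicitly.
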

\begin{proof}
     See \cite[Theorem 4.10 (ii)]{MR0569209}.
\end{proof}
 Since \(G\) is soluble, every minimal normal subgroup  \(N\) is elementary abelian. That is \(N\)  is a direct product of cyclic groups of order $p$ for some prime \(p\). Therefore we can regard \(N\) as a vector space over the field of order $p$, $\mathrm{GF}(p)$.  The following lemma is particularly useful when we view   minimal normal subgroups   in this way.

\begin{lemma}\label{Aschbacher}
    Let \(p,q\) and \(r\) be distinct primes, \(X\) a group of order \(r\) faithful on a \(q\)-group \(Q\), and \(V\) a faithful \(\mathrm{GF}(p)XQ\)-module. If \(q=2\) and \(r\) is a Fermat prime assume \(Q\) is abelian. Then \(C_{V}(X)\neq 0\).
\end{lemma}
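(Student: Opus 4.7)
The plan is to argue by contradiction: assume $C_V(X)=0$ and derive a contradiction by induction on the pair $(|Q|,\dim V)$. The first observation is coprime action. Since $r\neq p$, Maschke applied to the cyclic group $X$ of prime order $r$ acting on the $\mathrm{GF}(p)$-module $V$ gives $V=C_V(X)\oplus[V,X]$, so the assumption forces $V=[V,X]$.

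For the first reduction, use $p\nmid|XQ|$: Maschke decomposes $V$ into irreducible $\mathrm{GF}(p)[XQ]$-summands $V=\bigoplus_i V_i$. For each $i$, let $K_i=C_{XQ}(V_i)$; then $X\cap K_i=1$, for otherwise $V_i\subseteq C_V(X)=0$. So $X$ injects into $XQ/K_i$. If $X$ acts faithfully on the image $QK_i/K_i$ of $Q$ for some $i$, the induction hypothesis applies to the strictly smaller triple $(X,QK_i/K_i,V_i)$ (noting that the Fermat prime clause is preserved when passing to quotients of $Q$, since abelianness is inherited) and gives $0\neq C_{V_i}(X)\subseteq C_V(X)$, a contradiction. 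If on the other hand $X$ acts trivially on every $QK_i/K_i$, i.e.\ $[X,Q]\leq K_i$ for all $i$, then $[X,Q]\leq\bigcap_i K_i=1$ by faithfulness of $V$, contradicting that $X$ is faithful on $Q$. Hence we may assume $V$ is irreducible as an $\mathrm{GF}(p)[XQ]$-module.

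Next, apply Clifford theory. Decompose $V|_Q=W_1\oplus\cdots\oplus W_k$ into its $Q$-homogeneous components; $X$ permutes them. Since $|X|=r$ is prime, every orbit has length $1$ or $r$. If some orbit has length $r$, then for any $0\neq v$ in a component of that orbit the sum $v+v^x+\cdots+v^{x^{r-1}}$ lies in $C_V(X)$ and is non-zero, because its terms lie in distinct, hence linearly independent, homogeneous components; this contradicts the assumption. So $V|_Q$ is homogeneous, which forces $V|_Q\cong W^{\oplus e}$ for a single irreducible $\mathrm{GF}(p)Q$-module $W$; by faithfulness of $XQ$ on $V$, $Q$ acts faithfully on $W$, and a last application of the kernel reduction lets us assume $V$ itself is an irreducible, faithful $\mathrm{GF}(p)Q$-module.

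Finally, by Hall's theorem, a $q$-group admitting a faithful irreducible representation in characteristic $p\neq q$ is either cyclic or of symplectic type, namely a central product of an extraspecial $q$-group with a cyclic group (with the usual dihedral, semidihedral, or generalised quaternion variants when $q=2$). If $Q$ is cyclic, then $X$ embeds in $\mathrm{Aut}(Q)$, which is cyclic of order $q^{a-1}(q-1)$, forcing $r\mid q-1$; decomposing $V$ over an algebraic closure into character spaces of $Q$ and tracking the $X$-action (Galois on $q^a$-th roots of unity) then exhibits a non-zero $X$-fixed vector in $V$. If $Q$ is non-abelian, then $X$ acts faithfully on the symplectic $\mathrm{GF}(q)$-space $Q/Z(Q)$; the Fermat prime hypothesis precisely excludes the configuration $\mathrm{SL}_2(3)\cong Q_8\rtimes C_3$, which, acting on $\mathrm{GF}(p)^2$ for $p\equiv 2\pmod 3$, is a genuine counterexample without that hypothesis, and a case analysis on $r$ and on the rank of $Q/Z(Q)$ handles the remaining configurations. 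This last non-abelian symplectic-type case is the technical heart of the argument and the main obstacle: the Fermat prime hypothesis is sharp, so the structural analysis must be carried out uniformly in $r$ and $q$ and must verify that, once the $\mathrm{SL}_2(3)$ pathology is ruled out, every remaining faithful action of a prime-order cyclic group on a symplectic $q$-space leaves an $X$-fixed vector in $V$.
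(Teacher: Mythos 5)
The paper does not prove this lemma at all: it is quoted verbatim from Aschbacher's \emph{Finite Group Theory} and the ``proof'' is the citation [Theorem 36.2]. So any self-contained argument is necessarily a different route, and the question is whether yours closes. Your opening reductions are essentially sound: coprime action (Maschke) gives $V=C_V(X)\oplus[V,X]$; the passage to an irreducible $\mathrm{GF}(p)XQ$-summand via the kernels $K_i$ works because $X$ has prime order (so ``non-trivial on $QK_i/K_i$'' means ``faithful''), and the Clifford-theoretic step showing $V|_Q$ must be homogeneous (else an orbit sum of length $r$ gives a fixed vector) is correct. These match the standard reductions.

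The final paragraph, however, contains a false claim and then stops exactly where the theorem gets hard. First, it is not true that a $q$-group with a faithful irreducible representation in coprime characteristic is cyclic or of symplectic type; the correct criterion is that $Z(Q)$ is cyclic (e.g.\ $C_q\wr C_q$ has a faithful irreducible module but is neither cyclic nor of symplectic type). P.~Hall's theorem gives symplectic type only under the much stronger hypothesis that \emph{every characteristic abelian subgroup} is cyclic, and reaching that situation requires further reductions you have not made --- typically one replaces $Q$ by $[Q,X]$, passes to a minimal $X$-invariant subgroup on which $X$ is still faithful, and invokes a critical-subgroup argument to land on $Q$ special or extraspecial. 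Second, and decisively, the non-abelian (extraspecial/symplectic-type) case is not proved: you describe it as ``the technical heart'' and assert that ``a case analysis \ldots handles the remaining configurations.'' That case --- computing $\dim C_V(X)$ for $X$ of order $r$ acting on an extraspecial $q$-group represented faithfully on $V$, and seeing that it can vanish only when $q=2$ and $r=2^m+1$ is Fermat with $Q$ non-abelian --- is precisely the content of Aschbacher's (36.1)--(36.2) and is the reason the hypothesis is stated the way it is. Also, the ``last application of the kernel reduction'' cannot make $V$ irreducible as a $Q$-module: homogeneity gives $V|_Q\cong W^{\oplus e}$ with $W$ faithful, but $X$ need not preserve any single copy of $W$, so the fixed-point analysis must still be carried out on $V$. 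As it stands the proposal is an accurate roadmap of the easy reductions with the genuinely difficult step left unproved and one structural lemma misquoted.
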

\begin{proof}
     See \cite[Theorem 36.2]{MR1777008}.
\end{proof}
\begin{lemma}\label{cyclic subgroups}
    Let \(N\) be an elementary abelian \(p\)-group and let \(Q\) be a non-cyclic abelian \(q\)-subgroup of \(\mathrm{Aut}(N)\), where \(q\) and \(p\) are distinct primes. Then \(N=\Pi_{a\in Q^{\#}}C_{N}(a)\).
\end{lemma}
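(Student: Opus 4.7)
The plan is to reduce to an elementary abelian rank-$2$ subgroup of $Q$, then exploit semisimplicity of $N$ as a module over this subgroup, and finally use Schur's lemma together with Wedderburn's theorem to force an element of order $q$ to act trivially on each irreducible constituent.

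First I would observe that, since $Q$ is a non-cyclic abelian $q$-group, the Frattini quotient $Q/\Phi(Q)$ is elementary abelian of rank at least $2$, and hence $Q$ contains a subgroup $Q_0$ with $Q_0 \cong C_q \times C_q$. As $Q_0^\# \subseteq Q^\#$, we have $\prod_{a \in Q_0^\#} C_N(a) \leq \prod_{a \in Q^\#} C_N(a) \leq N$, so it suffices to prove that $N = \prod_{a \in Q_0^\#} C_N(a)$.

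Next I would view $N$ as a $\mathrm{GF}(p)Q_0$-module. Since $|Q_0|=q^2$ is coprime to $p$, Maschke's theorem provides a decomposition $N = \bigoplus_i M_i$ into irreducible $\mathrm{GF}(p)Q_0$-submodules. For a fixed $i$, let $D_i = \mathrm{End}_{\mathrm{GF}(p)Q_0}(M_i)$; by Schur's lemma $D_i$ is a division ring, and being finite, Wedderburn's theorem identifies $D_i$ with a finite field extension of $\mathrm{GF}(p)$. Because $Q_0$ is abelian, each element of $Q_0$ acts on $M_i$ as an endomorphism that commutes with every other element of $Q_0$, hence as an element of $D_i^\times$. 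Thus the image of $Q_0$ in $\mathrm{Aut}_{\mathrm{GF}(p)}(M_i)$ lies inside the multiplicative group of a finite field, and is therefore cyclic. Since $Q_0$ itself is not cyclic, the kernel of the action of $Q_0$ on $M_i$ is non-trivial, so contains some $a_i \in Q_0^\#$; equivalently $M_i \leq C_N(a_i)$.

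Combining these observations gives $N = \sum_i M_i \leq \sum_{a \in Q_0^\#} C_N(a)$, and since $N$ is abelian the latter sum of subgroups equals the corresponding product $\prod_{a \in Q_0^\#} C_N(a)$. As this product is contained in $N$, equality holds, completing the proof. The only delicate point is the Schur/Wedderburn step producing cyclic image on each irreducible summand; everything else is formal bookkeeping and standard application of Maschke's theorem.
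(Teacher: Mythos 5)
Your proof is correct. Note that the paper does not actually prove this lemma: it simply cites \cite[Theorem 3.3]{MR0569209} (Gorenstein), so there is no in-paper argument to compare against step by step. What you have written is a clean, self-contained proof of the special case needed here, and it is essentially the standard argument behind the cited generation theorem, specialised to an elementary abelian module: reduce to $Q_0\cong C_q\times C_q$, apply Maschke to decompose $N$ into irreducibles, and then use Schur plus Wedderburn to see that an abelian group acts on an irreducible module through the multiplicative group of a finite field, hence cyclically, forcing a non-trivial kernel on each summand. Each step checks out: the existence of $Q_0$ follows from $Q$ being abelian and non-cyclic; the coprimality $p\neq q$ justifies Maschke; the action of each $g\in Q_0$ on an irreducible summand $M_i$ is indeed a $\mathrm{GF}(p)Q_0$-endomorphism because $Q_0$ is abelian, so the image of $Q_0$ in $\mathrm{GL}(M_i)$ embeds in $D_i^{\times}$ and is cyclic; and the passage from the direct sum of the $M_i$ to the product of the centralisers is immediate since $N$ is abelian. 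The only thing your write-up buys beyond the paper is transparency --- the reader sees why the result holds rather than being sent to Gorenstein --- at the cost of invoking module-theoretic machinery that the cited source packages away.
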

\begin{proof}
     See \cite[Theorem 3.3]{MR0569209}
\end{proof}

\begin{lemma}\label{Thompson}
    If \(G\) admits a fixed-point-free automorphism of prime order, then \(G\) is nilpotent.
\end{lemma}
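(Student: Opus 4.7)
This is Thompson's celebrated theorem from his 1959 doctoral thesis, and a complete proof rests on substantial local-analytic machinery; I will sketch the classical minimal-counterexample argument, which does not presuppose that $G$ is soluble. Let $p$ denote the order of the fixed-point-free automorphism $\phi$. Two standard preliminary facts are needed: first, $(|G|,p)=1$, since any $\phi$-invariant Sylow $p$-subgroup (produced by Sylow counting, as the orbit sizes of $\phi$ on the set of Sylow $p$-subgroups are $1$ or $p$ and the number of Sylow $p$-subgroups is $\equiv 1\pmod p$) would have a nontrivial $\phi$-fixed element in its centre, contradicting $C_G(\phi)=1$; second, by the coprime-action Sylow theorem, for each $q\in\pi(G)$ there exists a unique $\phi$-invariant Sylow $q$-subgroup of $G$. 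Moreover $\phi$ acts fixed-point-freely on every $\phi$-invariant quotient $G/N$, since the bijectivity of $x\mapsto x^{-1}\phi(x)$ on $N$ lets one lift any $\phi$-fixed coset to a $\phi$-fixed element of $G$; so proper $\phi$-invariant quotients inherit the hypothesis.

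Suppose now, for contradiction, that $G$ is a counterexample of minimum order. By the previous paragraph every proper $\phi$-invariant subgroup and every proper $\phi$-invariant quotient of $G$ is nilpotent. Fix an odd prime $q\in\pi(G)$ (the prime $q=2$, when it divides $|G|$, requires a parallel but more delicate analysis) and let $Q$ be the $\phi$-invariant Sylow $q$-subgroup of $G$. The characteristic subgroup $Z(J(Q))$ of $Q$ is $\phi$-invariant, so $N_G(Z(J(Q)))$ is $\phi$-invariant. If this normaliser is a proper subgroup of $G$, it is nilpotent and in particular has a normal $q$-complement, whence by Thompson's normal $p$-complement theorem $G$ itself has a normal $q$-complement $K_q\trianglelefteq G$. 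If instead $N_G(Z(J(Q)))=G$, then $Z(J(Q))\trianglelefteq G$; the delicate analysis of the $\phi$-invariant centraliser $C_G(Z(J(Q)))$, together with the nilpotence of the proper $\phi$-invariant quotient $G/Z(J(Q))$ and the embedding of $G/C_G(Z(J(Q)))$ into $\mathrm{Aut}(Z(J(Q)))$, either produces the required normal $q$-complement of $G$ or directly forces $G$ to be nilpotent.

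Iterating the previous paragraph over all $q\in\pi(G)$ produces a normal $q$-complement $K_q\trianglelefteq G$ for every prime divisor of $|G|$. Then $\bigcap_{q}K_q=1$, and for each prime $r$ the Sylow $r$-subgroup of $G$ coincides with $\bigcap_{q\ne r}K_q$, which is normal in $G$; hence every Sylow subgroup of $G$ is normal, $G$ is the direct product of its Sylow subgroups, and $G$ is nilpotent --- contradicting the choice of $G$. The main obstacles are the invocation of Thompson's normal $p$-complement theorem (itself a deep piece of local analysis resting on the Thompson subgroup $J(P)$ and Hall--Higman techniques) and the fusion-theoretic case analysis when $Z(J(Q))\trianglelefteq G$; outside these steps the argument is driven entirely by the abundance of $\phi$-invariant Sylow subgroups and by the bijectivity of $x\mapsto x^{-1}\phi(x)$.
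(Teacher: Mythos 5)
The paper does not prove this statement at all: it is Thompson's theorem on fixed-point-free automorphisms of prime order, and the authors simply cite Gorenstein \cite[Theorem 10.2.1]{MR0569209}. For a result of this depth, a citation is the appropriate ``proof,'' and your preliminary reductions are correctly executed: the coprimality $(|G|,p)=1$ via an invariant Sylow $p$-subgroup meeting the centre of $P\langle\phi\rangle$ nontrivially, the existence and uniqueness (using $C_G(\phi)=1$) of $\phi$-invariant Sylow subgroups, and the inheritance of fixed-point-freeness by $\phi$-invariant quotients via the bijectivity of $x\mapsto x^{-1}\phi(x)$ are all standard and sound.

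However, as a standalone proof your sketch has two genuine gaps, both of which you partially flag but then pass over. First, the case $N_G(Z(J(Q)))=G$ is dispatched with ``the delicate analysis \dots either produces the required normal $q$-complement of $G$ or directly forces $G$ to be nilpotent'' --- but this is exactly where the substance of Thompson's argument lives; nothing in your text indicates how the analysis of $C_G(Z(J(Q)))$ and the nilpotent quotient $G/Z(J(Q))$ actually yields either conclusion, so the minimal-counterexample argument is not closed. Second, and more concretely, your final paragraph asserts that ``iterating \dots over all $q\in\pi(G)$'' produces a normal $q$-complement for \emph{every} prime divisor, including $2$. The Glauberman--Thompson normal $p$-complement criterion via $N_G(Z(J(P)))$ that you invoke is valid only for odd primes and genuinely fails at $p=2$, so the iteration cannot include $q=2$ as written. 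With normal $q$-complements only for odd $q$, the intersection $\bigcap_{q\ne 2}K_q$ yields a normal Sylow $2$-subgroup $S$ of $G$, not a normal $2$-complement; and since an extension of a $2$-group by a nilpotent group need not be nilpotent, the concluding ``direct product of Sylow subgroups'' step does not go through without a separate argument at the prime $2$ (which is handled by dedicated analysis in Gorenstein's proof). So the assembly step, as stated, would fail.
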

\begin{proof}
    See \cite[Theorem 10.2.1]{MR0569209}.
\end{proof}

\section{Frobenius groups and disconnected normalising graphs}

We recall some properties of Frobenius groups.
\begin{lemma}\label{Frobenius}
    If \(G\) is a Frobenius group with kernel \(K\) and complement \(C\), then the following conditions hold:
    \begin{enumerate}[label=(\roman*)]
        \item \(C\) induces a regular group of automorphisms of \(K\).
        \item \(|C|\) divides \(|K|-1\).
        \item \(K\) is nilpotent and is abelian if \(|C|\) is even.
        \item The Sylow \(p\)-subgroups of \(C\) are cyclic for odd \(p\) and cyclic or generalised quaternion if \(p=2\).
        \item Any subgroup of \(C\) of order \(pq\), where \(p\) and \(q\) are primes, is cyclic.
        \item If \(|C|\) is odd, then \(C\) is metacyclic. If \(|C|\) is even, then \(C\) possesses a unique involution and this is contained in \(Z(C)\).
    \end{enumerate}
\end{lemma}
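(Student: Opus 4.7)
The plan is to derive everything from the equivalent characterisation that the complement $C$ acts fixed-point-freely by conjugation on $K^\#$. Once this is in hand, (i) is simply the statement that a fixed-point-free action is the same as a regular action on $K^\#$, and (ii) follows by orbit counting: every $C$-orbit on $K^\#$ has size $|C|$, so $|C|$ divides $|K^\#|=|K|-1$.

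For (iii), I would let $p$ be any prime dividing $|C|$ and pick an element $c\in C$ of order $p$; conjugation by $c$ is a fixed-point-free automorphism of $K$ of prime order, so $K$ is nilpotent by Lemma~\ref{Thompson}. When $|C|$ is even, the Sylow $2$-subgroup of $C$ contains an involution $t$ acting fixed-point-freely on $K$; the map $k\mapsto k^{-1}k^t$ from $K$ to $K$ is then injective (if $k^{-1}k^t=\ell^{-1}\ell^t$ then $\ell k^{-1}\in C_K(t)=1$) and hence surjective, so every element of $K$ has the form $k^{-1}k^t$ and is therefore inverted by $t$; a group inverted by an automorphism is abelian.

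Parts (iv) and (v) are the core of the lemma. For (iv), let $P$ be a Sylow $p$-subgroup of $C$ and suppose for contradiction that $P$ contains a non-cyclic abelian subgroup $Q$. Taking a $Q$-invariant elementary abelian $q$-chief factor $N$ inside the nilpotent kernel $K$ and lifting back, Lemma~\ref{cyclic subgroups} applied to $Q$ acting on $N$ gives that $N$ is generated by the subgroups $C_N(a)$ for $a\in Q^\#$, so some non-trivial element of $Q$ must have a non-trivial fixed point in $N$; but this contradicts fixed-point-freeness of the action of $C$ on $K^\#$. Hence $P$ has no non-cyclic abelian subgroup and Lemma~\ref{no non cyclic subgroup in P group} concludes that $P$ is cyclic or (if $p=2$) generalised quaternion. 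For (v), a group of order $pq$ with $p\ne q$ has cyclic Sylows by (iv); the Sylow $q$-subgroup is normal, so the group is a semidirect product $\mathbb Z_p \rtimes \mathbb Z_q$, and any non-trivial such semidirect product contains a subgroup of order $p$ with a non-trivial fixed point under the complementary factor, contradicting the fixed-point-free action inherited by every subgroup of $C$; hence the action is trivial and the subgroup is cyclic.

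For (vi), if $|C|$ is odd then (iv) gives that every Sylow subgroup of $C$ is cyclic, and it is classical that a group with all Sylow subgroups cyclic (a Z-group) is metacyclic — in fact a split extension of two cyclic groups — which can be deduced by induction using (v) to force any two Sylow subgroups to commute appropriately. If $|C|$ is even, (iv) implies the Sylow $2$-subgroup is cyclic or generalised quaternion, each of which contains a unique involution $z$; uniqueness forces $z$ to be centralised by every element of $C$ normalising this Sylow $2$-subgroup, and an easy argument (or direct use of the fact that all involutions of $C$ are conjugate by Sylow's theorem and hence equal) gives $z\in Z(C)$. The main obstacle in executing this plan is the bookkeeping in (iv), where one must identify a $Q$-invariant elementary abelian chief factor of $K$ on which the action of $Q$ is still faithful in order to legitimately apply Lemma~\ref{cyclic subgroups}; this is handled by passing to a minimal $Q$-invariant subgroup of $\Omega_1(Z(R))$ for an appropriate Sylow $q$-subgroup $R$ of $K$, using nilpotency of $K$ from (iii).
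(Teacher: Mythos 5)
The paper does not actually prove this lemma: it is quoted from Gorenstein and the ``proof'' is a one-line citation to \cite[Theorem 10.3.1]{MR0569209}. Your attempt to reprove it from first principles is therefore a genuinely different and much more ambitious route. Parts (i)--(iv) of your sketch are essentially sound: the reduction to the fixed-point-free conjugation action of $C$ on $K^{\#}$, the orbit count for (ii), Thompson's theorem together with the inversion map $k \mapsto k^{-1}k^{t}$ for (iii), and the application of Lemma~\ref{cyclic subgroups} to $\Omega_1(Z(R))$ for (iv) all work. (In (iv) the faithfulness issue you flag is a non-issue: if $Q$ acts unfaithfully on the chosen elementary abelian section, the kernel of the action already supplies an element of $C^{\#}$ with non-trivial fixed points in $K$, which is the desired contradiction.)

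The genuine gap is in (v), which is the real content of the lemma. Your claimed contradiction --- that a non-trivial semidirect product of order $pq$ ``contains a subgroup of order $p$ with a non-trivial fixed point under the complementary factor'' --- confuses the internal conjugation action inside the order-$pq$ subgroup with the action of $C$ on $K$. The fixed-point-freeness hypothesis says nothing about how subgroups of $C$ act on one another, and in any case a non-trivial action of $\mathbb{Z}_p$ on $\mathbb{Z}_q$ has no non-trivial fixed points, so there is no contradiction to be extracted from the abstract group structure alone. One must bring $K$ back in: let $X=Q_0\rtimes P_0$ be non-abelian of order $pq$ and let $V$ be a faithful elementary abelian $r$-section of $K$ (faithfulness again follows from fixed-point-freeness). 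Then either the classical averaging argument (each of the $q+1$ subgroups of prime order of $X$ sums to $0$ on $V$, as does $X$ itself, forcing $q\cdot\mathrm{id}_V=0$ with $r\neq q$) or an appeal to Lemma~\ref{Aschbacher} (taking its $X$, $Q$, $V$ to be $P_0$, $Q_0$ and the section) produces a non-trivial fixed point of $P_0$ on $V$, the required contradiction. Two smaller slips: in (vi) conjugate involutions need not be equal, so Sylow's theorem does not by itself give a unique involution in $C$ --- instead use (iii): every involution inverts the abelian kernel, and $C$ acts faithfully on $K$, so the product of two involutions centralises $K$ and is trivial; and in (v) you omit the case $p=q$, which is immediate from (iv) but is included in the statement.
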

\begin{proof}
    See \cite[Theorem 10.3.1]{MR0569209}.
\end{proof}

\begin{lemma}\label{odd sol frobenius}
     If \(C\) is a soluble Frobenius complement of odd order and \(s \in C\) has prime order, then $\langle s\rangle$ is a normal subgroup of $C$.
\end{lemma}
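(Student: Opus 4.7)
The plan is to exploit the metacyclic / $Z$-group structure of $C$. Because $|C|$ is odd, Lemma~\ref{Frobenius}(iv) yields that every Sylow subgroup of $C$ is cyclic; combined with Lemma~\ref{Frobenius}(vi), Zassenhaus's classical structure theorem for groups with all Sylow subgroups cyclic supplies a semidirect decomposition $C = A \rtimes B$ with $A = C'$ cyclic, $B$ cyclic, and $\gcd(|A|,|B|)=1$. With this in hand I would split according to whether $p=o(s)$ divides $|A|$ or $|B|$.

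The case $p \mid |A|$ is quick: since $A$ is a cyclic normal Hall $\pi(A)$-subgroup, all Sylow $p$-subgroups of $C$ lie in $A$ and coincide with the unique Sylow $p$-subgroup of $A$. Hence $s \in A$ and $\langle s\rangle$ is the unique subgroup of order $p$ of the cyclic group $A$, so it is characteristic in $A$ and therefore normal in $C$.

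The substantive case is $p \mid |B|$, where my target is the stronger statement $s \in Z(C)$. The first step will be to prove $s$ centralises $A$: for each prime $p' \mid |A|$ and $a \in A$ of order $p'$, the subgroup $\langle a\rangle$ is characteristic in $A$ (unique subgroup of its order in the cyclic Sylow $A_{p'}$), so $s$ normalises $\langle a\rangle$ and $\langle a,s\rangle = \langle a\rangle\langle s\rangle$ has order $pp'$; Lemma~\ref{Frobenius}(v) then gives that $\langle a,s\rangle$ is cyclic and hence $[a,s]=1$. Because the kernel of the restriction $\mathrm{Aut}(A_{p'}) \to \mathrm{Aut}(\Omega_{1}(A_{p'}))$ is a $p'$-group while $s$ has coprime order $p$, this will force $s$ to act trivially on all of $A_{p'}$, and then, ranging over $p'$ and using $A = \prod_{p'} A_{p'}$, on all of $A$. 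The second step will be to deduce $s \in B$: writing $s=xb$ uniquely with $x \in A$, $b \in B$, the centralisation of $A$ by $s$ transfers to $b = x^{-1}s$, so $x$ and $b$ commute and $1=s^p=x^{p} b^{p}$; together with $A\cap B = 1$ and $p\nmid|A|$ this forces $x=1$, so $s=b\in B$. Since $B$ is cyclic, $B$ centralises $s$, and combined with the first step this yields $C = AB \leq C_{C}(s)$, so $\langle s\rangle \leq Z(C) \trianglelefteq C$.

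The main obstacle, in my view, is cleanly obtaining the coprime-order semidirect decomposition $C = A \rtimes B$: Lemma~\ref{Frobenius}(vi) only records metacyclicity, so the stronger structural conclusion above needs the Zassenhaus $Z$-group theorem as an additional input. Once this decomposition is available, the remainder is an almost mechanical use of Lemma~\ref{Frobenius}(v) together with the standard fact that an automorphism of a cyclic $p'$-group which is trivial on $\Omega_{1}$ has $p'$-power order.
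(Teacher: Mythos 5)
Your proof is correct, but it is organised around a different structural anchor than the paper's. The paper argues by contradiction via the Fitting subgroup: if $\langle s\rangle$ is not normal then $s\notin F(C)$, so by $C_C(F(C))\le F(C)$ some $O_r(C)$ (cyclic, as $r$ is odd) is not centralised by $s$; Lemma~\ref{Frobenius}(v) makes $\Omega_1(O_r(C))\langle s\rangle$ cyclic, and the coprime-action fact (an automorphism of coprime order trivial on $\Omega_1$ of a cyclic $r$-group is trivial) gives the contradiction. You instead invoke the Zassenhaus $Z$-group theorem to write $C=A\rtimes B$ with $A=C'$ and $B$ cyclic of coprime orders, and then split on whether $p$ divides $|A|$ or $|B|$; the second case reuses exactly the same two ingredients (Lemma~\ref{Frobenius}(v) plus the $\Omega_1$-detection of coprime action) to show $s$ centralises $A$ and hence lies in $Z(C)$. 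Both arguments are sound. The paper's route is the more economical one here: it needs only inputs already quoted in the paper together with the standard fact $C_C(F(C))\le F(C)$ for soluble groups, and it avoids the case split. Your route imports an external theorem not cited in the paper (the full Zassenhaus/Hölder classification of groups with all Sylow subgroups cyclic --- Lemma~\ref{Frobenius}(vi) as stated only gives metacyclicity, as you rightly flag), but in exchange it yields slightly more information, namely that $s$ is central in $C$ whenever $o(s)$ does not divide $|C'|$. One small point worth making explicit in your write-up: in the case $p\mid|B|$ you should note that $\langle a\rangle$ is normal in $C$ (characteristic in $A\trianglelefteq C$), which is what guarantees that $\langle a\rangle\langle s\rangle$ is a subgroup of order $pp'$ before Lemma~\ref{Frobenius}(v) can be applied; you implicitly use this but do not say it.
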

\begin{proof}
     Let \(s\in C\) be such that \(o(s)=t\) for some \(t\in \pi(C)\) and assume that $\langle s\rangle$ is not normal in $C$. Since \(C\) is a Frobenius complement of odd order,   Lemma \ref{Frobenius} (iv)  implies that the Sylow $t$-subgroups of $C$ are cyclic.

     If \(s\in  F(C)\),  then \(\langle s\rangle=\Omega_{1}(O_{t}(C))\). Since \(\Omega_{1}(O_{t}(C)) \) is characteristic in  \(O_{t}(C)\) and \(O_{t}(C)\) is normal in \(C\), we  have \(\langle s\rangle\) is normal in  \(C\), which is a contradiction. Therefore $s \not \in F(C)$. Hence, as $C_C(F(C))\le F(C)$, there exists \(r\in \pi(C)\) such that \(O_{r}(C)\) is not centralised by $s$. As $r$ is odd, $O_r(C)$ is cyclic by Lemma~\ref{Frobenius} (iv). Thus  Lemma~\ref{Frobenius} (v) implies that
      $\Omega_1(O_r(C))\langle s\rangle$ is cyclic. In particular $s$ centralises $\Omega_1(O_r(C))$ and this means that $s$ centralises $O_r(C)$  by \cite[(24.9)]{MR1777008}, which is a contradiction. This proves the lemma.
\end{proof}

We  continue  this section with an elementary lemma about the connected components of normalising graphs of Frobenius groups.

\begin{lemma}\label{connected components}
    Suppose that $G=KC$ is a Frobenius group and that $\Gamma(G)$ is disconnected.  Then the connected components of $\Gamma(G)$ are $\Gamma(K)$ and $\Gamma(C^k)$ for $k \in K$.  Furthermore, the connected components of $\Gamma(G)$ have diameter at most $2$.
\end{lemma}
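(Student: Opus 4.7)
The plan is to exploit the Frobenius partition $G^\# = K^\# \sqcup \bigsqcup_{k\in K}(C^k)^\#$. Since the edge relation of $\Gamma(G)$ depends only on the cyclic subgroups generated by its endpoints, the subgraph of $\Gamma(G)$ induced on $K^\#$ is precisely $\Gamma(K)$, and the subgraph induced on $(C^k)^\#$ is precisely $\Gamma(C^k)$. It therefore suffices to establish two facts: (a) each of $\Gamma(K)$ and $\Gamma(C^k)$ is connected of diameter at most $2$; and (b) there are no edges of $\Gamma(G)$ between any two of these induced subgraphs.

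For (a) I would exhibit a universal neighbour in each piece. The kernel $K$ is nilpotent by Lemma~\ref{Frobenius}(iii), so $Z(K)$ is non-trivial; any $z\in Z(K)^\#$ commutes with every other element of $K$ and so is adjacent in $\Gamma(K)$ to every other vertex. For $\Gamma(C^k)$, if $|C|$ is even then the unique central involution of $C^k$ supplied by Lemma~\ref{Frobenius}(vi) plays the analogous role; if $|C|$ is odd, Lemma~\ref{odd sol frobenius} shows that any $s\in C^k$ of prime order generates a normal subgroup of $C^k$, so $\langle s\rangle$ is normalised by every cyclic subgroup of $C^k$ and $s$ is once again a universal neighbour.

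For the first half of (b), the case of two distinct complement conjugates is self-contained: if $\langle y\rangle$ normalises $\langle z\rangle$ with $y\in (C^k)^\#$ and $z\in (C^{k'})^\#$, then $z^y$ is a non-identity element of $\langle z\rangle\subseteq C^{k'}$ and also of $y^{-1}C^{k'}y = C^{k'y}$, so by the Frobenius property $C^{k'}=C^{k'y}$, i.e.\ $y\in N_G(C^{k'})=C^{k'}$. Combined with $y\in C^k$ this forces $C^k=C^{k'}$, a contradiction.

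The main obstacle is ruling out edges between $K^\#$ and $(C^k)^\#$, which in general Frobenius groups can occur: in $S_3$, for instance, each transposition normalises the cyclic kernel. Here I would invoke the disconnectedness hypothesis for the first time. If such an edge $x\sim y$ existed, conjugation by any $g\in K$ (an inner automorphism of $G$) would give an edge $x^g\sim y^g$ with $x^g\in K^\#$ and $y^g\in (C^{kg})^\#$. As $g$ runs over $K$, the element $kg$ runs over all of $K$, so every $(C^h)^\#$ would admit an edge to $K^\#$. Combined with (a), this would make $\Gamma(G)$ connected, contradicting the hypothesis and completing the proof.
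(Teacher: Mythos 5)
Your proposal is correct and follows essentially the same route as the paper: a universal neighbour (coming from nilpotency of $K$, the central involution, or Lemma~\ref{odd sol frobenius}) gives diameter at most $2$ within each piece, the trivial-intersection property of Frobenius complements rules out edges between distinct conjugates of $C$, and the disconnectedness hypothesis rules out edges between $K$ and the complements. Your write-up is somewhat more explicit than the paper's (in particular in spelling out the conjugation argument that spreads a single $K$--$C$ edge to every conjugate of $C$), but the ideas are the same.
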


\begin{proof}
 As $C$ has a normal cyclic subgroup by Lemmas~\ref{Frobenius} (vi) and \ref{odd sol frobenius} and $K$ is nilpotent, both the $K$ and $C^k$, $k\in K$  have connected normalising graph of diameter at most $2$. If $a \in C^k$ and $b \in C$  with $a$ normalising $\langle b\rangle$, then $b^a \in C \cap C^a$ which means that $C= C^a$ and $a \in C\cap C^k$. Thus $C=C^k$.  Hence there are no edges between distinct complements in $G$.  Since $\Gamma(C)$ and $\Gamma(K)$ are connected, there are also no edges between $\Gamma(C)$ and $\Gamma(K)$. This proves the main claim.
\end{proof}

Notice that the proof of Lemma~\ref{connected components} shows that in  the normalising graph of a Frobenius group, there are no edges between elements of different Frobenius complements.

\begin{proposition}\label{disconnected}
    Let \(G=KC\) be a Frobenius group with kernel \(K\) and complement \(C\). Then \(\Gamma(G)\) is disconnected if and only if for all \(p\in\pi(C)\) and \(r\in\pi(K)\) we have \(p \nmid r-1\). Furthermore, if \(\Gamma(G)\) is connected, then \(\mathrm{diam}(\Gamma(G))\leq 4\).
\end{proposition}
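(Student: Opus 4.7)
The plan is to prove the biconditional and the diameter bound together by exhibiting, in the case where some $p \in \pi(C)$ and $r \in \pi(K)$ satisfy $p \mid r-1$, an element $y \in Z(K)^{\#}$ that serves as a near-universal hub in $\Gamma(G)$, and by a short structural argument for the converse.

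First I will produce an element $s \in C$ of prime order $p$ with $\langle s\rangle \triangleleft C$, so that $s$ is adjacent to every other vertex of $\Gamma(C)$. When $|C|$ is odd this is immediate from Lemma~\ref{odd sol frobenius} applied to any choice of $p$ with $p\mid r-1$. When $|C|$ is even, I take $s$ to be the unique central involution of $C$ furnished by Lemma~\ref{Frobenius} (vi), noting via Lemma~\ref{Frobenius} (iii) and fixed-point-freeness that $K$ is abelian of odd order, so $2\mid r-1$ holds for every $r\in\pi(K)$ and the choice $p=2$ is legitimate. Let $R$ be the Sylow $r$-subgroup of $K$, which is characteristic in $K$ and hence $\langle s\rangle$-invariant, and view $V=\Omega_1(Z(R))\subseteq Z(K)$ as an $\mathbb{F}_r\langle s\rangle$-module. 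Since $p\mid r-1$, the field $\mathbb{F}_r$ contains a primitive $p$-th root of unity, so every irreducible $\mathbb{F}_r\langle s\rangle$-module is one-dimensional; and since $C$ acts fixed-point-freely on $K$, $C_V(s)=0$. Maschke's theorem then yields a one-dimensional $\langle s\rangle$-invariant subspace, producing $y\in Z(K)^{\#}$ with $s\sim y$ in $\Gamma(G)$.

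To deduce connectedness and $\mathrm{diam}(\Gamma(G))\le 4$, I will show every vertex of $\Gamma(G)$ lies within distance $2$ of $y$. For $u\in K^{\#}$ this is immediate since $y\in Z(K)$. For $u\in (C^k)^{\#}$ with $k\in K$, the hub $s^k$ of the complement $C^k$ satisfies $u\sim s^k$; conjugating the edge $s\sim y$ by $k$ shows that $s^k$ normalises $\langle y\rangle^k=\langle y^k\rangle=\langle y\rangle$, the last equality because $y\in Z(K)$ and $k\in K$ force $y^k=y$. Hence $d(u,y)\le 2$, and the diameter bound follows from the triangle inequality.

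For the converse I will suppose that some $x\in C^{\#}$ and $y'\in K^{\#}$ are adjacent in $\Gamma(G)$ and deduce that $p\mid r-1$ for some $p\in\pi(C)$ and $r\in\pi(K)$. If $\langle y'\rangle$ normalises $\langle x\rangle$, then $H=\langle x\rangle\langle y'\rangle$ is a subgroup in which both cyclic factors are normal (the factor $\langle y'\rangle$ because $K\cap H=\langle y'\rangle$ by coprimality of orders together with $K\triangleleft G$), hence $H=\langle x\rangle\times\langle y'\rangle$ and $y'\in C_G(x)\subseteq C$, a contradiction. Otherwise $\langle x\rangle$ normalises $\langle y'\rangle$; replacing $x$ by a suitable power of prime order $p\in\pi(C)$, conjugation by $x$ gives a non-trivial (else again $y'\in C$) automorphism of $\langle y'\rangle$ of order $p$. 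Hence $p\mid \varphi(o(y'))$, and since $\gcd(p,o(y'))=1$ this forces $p\mid r-1$ for some $r\in\pi(K)$ dividing $o(y')$. Combining this with the observation from the proof of Lemma~\ref{connected components} that there are no edges between distinct Frobenius complements completes the disconnectedness argument. The main delicate step will be the module-theoretic production of $y$ together with the even-order case of the hub construction; the rest is routine structural computation.
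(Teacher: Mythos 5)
Your proposal is correct, and its central step takes a genuinely different route from the paper. For the direction ``$p\mid r-1$ for some $p\in\pi(C)$, $r\in\pi(K)$ implies $\Gamma(G)$ connected'', the paper argues by contradiction: it takes a minimal normal subgroup $V$ of $K\langle c\rangle$ inside $Z(K)$, counts the cyclic subgroups of $V$ modulo $p$ to force $|V|=r^p$, and then exhibits the fixed point $vv^{c}\cdots v^{c^{p-1}}$. You instead construct the edge directly: since $p\mid r-1$, the field $\mathrm{GF}(r)$ is a splitting field for $\langle s\rangle$, so Maschke decomposes $\Omega_1(Z(R))$ into one-dimensional $\langle s\rangle$-invariant subspaces, any of which yields $y\in Z(K)^\#$ with $s\sim y$. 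Both are short; yours is constructive and avoids the counting trick, while the paper's avoids invoking complete reducibility. Your handling of the even case (the central involution of $C$ together with the automatic divisibility $2\mid r-1$ for the odd-order kernel) is a point the paper glosses over in its diameter argument, where it cites Lemma~\ref{odd sol frobenius} to get $\langle y\rangle\unlhd C$ without separating the even case, so your treatment is if anything more careful. The forward (disconnectedness) direction is essentially the paper's argument in different clothing: the paper rules out ``$\langle k\rangle$ normalises $\langle c\rangle$'' via $C\cap C^k=1$, whereas you rule it out by showing $\langle x\rangle\langle y'\rangle$ would be a direct product forcing $y'\in C_G(x)\le C$; both then read off $p\mid r-1$ from $\mathrm{Aut}(\langle y'\rangle)$. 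Finally, your diameter bound via ``every vertex is within distance $2$ of the hub $y\in Z(K)$'' is cleaner than the paper's explicit path $x\sim y\sim n\sim y^k\sim x^*$, which relies on the slightly informal claim that elements in distinct complements realise the diameter; the two arguments are otherwise the same idea.
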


\begin{proof}   Observe that, as \(K\) is nilpotent, \(\Gamma(K)\) is connected and Lemmas \ref{Frobenius} (vi) and  \ref{odd sol frobenius} imply that \(\Gamma(C^g)\) is connected for all $g \in G$.

Since $G =K \cup \bigcup_{g \in G}C^g$, it follows that $G$ is connected if and only if an element of $G\setminus K=(\bigcup_{g \in G}C^g)^\#$  forms an edge with some element of $K$. As $K$ is normal in $G$, this happens   if and only if there is an edge $c\sim k$ in $\Gamma(G)$ with $c \in C$ and $k \in K$.

Suppose that $G$ is connected and that $c \sim k$ is an edge in $\Gamma(G)$ with $c \in C$ and $k\in K$. We may assume that  $o(c)=p$ and $o(k)=r$ for some $p\in \pi(C)$ and $r \in \pi(K)$.  By Lemma~\ref{Frobenius} (ii), $p\ne r$. Since \(G\) is a Frobenius group, \(C^{g}\cap C=1\) for all \(g\in G \setminus C\). Thus  \(c\in N_{G}(\langle k \rangle)\). Furthermore, as \(C_{G}(k)\leq K\) for all \(k\in K^{\#}\), we  have \(c\in N_{G}(\langle k \rangle)\setminus C_{G}(\langle k \rangle)\).  Since \(|\mathrm{Aut}(\langle k \rangle)|= r-1\), we necessarily have \(p \mid (r-1)\).

Conversely, assume that \(p\mid(r-1)\) for some \(p\in \pi(C)\) and \(r\in \pi(K)\). We intend to show that $\Gamma(G)$ is connected. Aiming for a contradiction, assume that $\Gamma(G)$ is disconnected.  Let $V \le Z(O_p(K)) \le Z(K)\le K$ be a minimal normal subgroup of $K\gen{c}$. Since $\Gamma(G)$ is disconnected, $V$ is not cyclic and so $|V| = r^m$ with $m>1$. The minimality of $V$ implies that $\gen{c}$ does not normalise a non-trivial cyclic subgroup of $V$. Let $\mathcal C$ be the set of non-trivial cyclic subgroups of $V$. Then, as  \(p\mid (r-1)\), we  have \(r\equiv 1 \pmod p\) and so
$$|\mathcal C|= \frac{r^{m}-1}{r-1} = r^{m-1}+\ldots+ r+ 1 \equiv m\pmod p.$$
As $|\mathcal C|\equiv 0 \pmod p$, we conclude that
$m=p$.  Therefore, for $v \in V^\#$, we have $V=\gen{v,v^{c},v^{c^{2}},\ldots v^{c^{p-1}}}$ has order $r^p$. However, in this case,  $w=vv^{c}v^{c^{2}}\ldots v^{c^{p-1}}\in V$ is non-trivial and centralised by $\gen{c}$. Thus $c\sim w$ and we have a contradiction.  Hence $\Gamma(G)$ is connected.

Finally we show that if \(\Gamma(G)\) is connected, then \(\mathrm{diam}(\Gamma(G))\leq 4\). Observe that the elements which lie in distinct conjugates of \(C\) are the greatest distance apart of all pairs of elements in \(G^{\#}\). Therefore it suffices to show that the distance between any two such elements is at most \(4\).  Let \(x \in C\) and  $x^*\in C^{g}$ for some \(g\in G\).  Since \(\Gamma(G)\) is connected there exists \(y\in C\) such that \(o(y)= r\) where \(r\in\pi(C)\) and \(r\mid (p-1)\) for some \(p\in\pi(K)\). Thus \(y\sim n\) where \(n\in Z(K)\) with \(o(n)=p\). By Lemma \ref{Frobenius} (iv) and  \ref{odd sol frobenius} we know that \(\langle y\rangle \unlhd C\). Hence, \(y\) shares an edge with every element of \(C^\#\).
Since \(G=CK\) we may write \(g=ck\) for some \(c\in C\) and  \(k\in K\).  Then \(C^{g}=C^{ck}=C^{k}\) and in particular \(\langle y^{k}\rangle \unlhd C^{g}\). Furthermore, \(y^{k}\sim n^{k}\). However, as $n\in Z(K)$,   \(n^{k}=n\) and we obtain the path
\[ x \sim y \sim n\sim y^{k} \sim x'.
\]
Thus, if \(G\) is Frobenius group and \(\Gamma(G)\) is connected, then \(\mathrm{diam}(\Gamma(G))\leq 4\).
\end{proof}

\section{The distance from a minimal normal subgroup }

 From here on $G$ is a finite soluble group and $\Gamma(G)$ is its normalising graph.
The goal of this section is to establish the following theorem.

\begin{theorem}\label{norm distance}
    Suppose that \(G\) is a finite soluble group, $N$ is a minimal normal subgroup of $G$ and $x \in G^{\#}$. Then either $\mathrm{d}(x,N) \le 3$  or   \(G\) is a Frobenius group.
\end{theorem}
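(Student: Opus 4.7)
The plan is to reduce the theorem to the following statement about prime-order elements: for every prime-order $y\in G^\#$, either $\mathrm d(y,N)\le 2$ or $G$ is Frobenius. This suffices because any prime-order $y\in\langle x\rangle^\#$ is adjacent to $x$ (they centralise one another), giving $\mathrm d(x,N)\le 1+\mathrm d(y,N)$. One may also assume $|N|=p^m$ with $m\ge 2$, since otherwise $N=\langle n\rangle$ is a normal cyclic subgroup and $\mathrm d(x,N)\le 1$ trivially. Throughout I use $N\le Z(F(G))$: since $N\cap Z(O_p(G))\ne 1$ (the centre of a $p$-group meets every nontrivial normal subgroup), the minimality of $N$ forces $N\le Z(O_p(G))$, and $[O_\ell(G),N]\le O_\ell(G)\cap N=1$ for $\ell\ne p$.

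For $y$ of prime order $q$, the easy cases give $\mathrm d(y,N)\le 1$: if $q=p$, then $y$ and $N$ lie in a common Sylow $p$-subgroup and the $p$-group fixed-point argument gives $C_N(y)\ne 1$; if $q\ne p$ with $q\mid p-1$, every eigenvalue of $y$ on $N$ lies in $\mathrm{GF}(p)$, so $y$ either centralises $N$ or has an eigenvector (normalising some $\langle n\rangle\le N$); and if $q\ne p$, $q\nmid p-1$ but $C_N(y)\ne 1$, the claim is immediate. The remaining hard case is $q\ne p$, $q\nmid p-1$, $C_N(y)=1$.

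In the hard case I assume $\mathrm d(y,N)\ge 3$ and aim to show $G$ is Frobenius. First, $C_G(y)\cap F(G)=1$, for any $z\in(C_G(y)\cap F)^\#$ would centralise $N$ (by $N\le Z(F)$) and commute with $y$, giving the length-$2$ path $y\sim z\sim n$. Thus $y$ acts fixed-point-freely on $F(G)$, and the $q$-group fixed-point lemma applied to $y$ on $O_q(G)$ forces $O_q(G)=1$. The same argument applied to the normal subgroup $C_G(N)\supseteq F$ together with Thompson's theorem (Lemma \ref{Thompson}) gives $C_G(N)=F$; so $G/F$ embeds faithfully in $\mathrm{GL}(N)$. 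Let $Q$ be a Sylow $q$-subgroup of $G$ containing $y$: since $Q\cap F=O_q(G)=1$, $Q$ embeds in $G/F$. If $C_Q(y)$ contained a noncyclic abelian subgroup $A\supseteq\langle y\rangle$, Lemma \ref{cyclic subgroups} would give $N=\prod_{a\in A^\#}C_N(a)$, and since $C_N(y)=1$, some $a\in A\setminus\langle y\rangle$ would have $C_N(a)\ne 1$, producing the length-$2$ path $y\sim a\sim n$. So by Lemma \ref{no non cyclic subgroup in P group}, $C_Q(y)$ is cyclic or (when $q=2$) generalised quaternion.

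The main obstacle is the final stage: deducing from these accumulated constraints that $G$ itself is Frobenius. The eigenvector analysis of the easy cases shows $C_G(y)$ contains no nontrivial element with an eigenvector on $N$; hence $C_G(y)$ acts fpf on $N$, so $N\cdot C_G(y)$ is already a Frobenius group with complement $C_G(y)$. To upgrade this to $G$ being Frobenius with kernel $F$, I plan to use Lemma \ref{Aschbacher} to control how primes distinct from $p$ and $q$ can interact with the various $O_\ell(G)$, and the Frobenius-complement classification (Lemmas \ref{Frobenius} and \ref{odd sol frobenius}) to identify the complement structure, eventually verifying that every element of $G\setminus F$ acts fpf on $F$. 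This last step, extending fpf action from $C_G(y)$ to the whole of $G\setminus F$, is the most delicate technical point of the proof.
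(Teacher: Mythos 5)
Your preparatory steps are sound and track the paper's own lemmas closely: the observations that $q$-elements with $q=p$ or $q\mid p-1$ are at distance $1$ from $N$, that $C_G(y)\cap F(G)=1$, that $C_G(N)=F(G)$ via Lemma~\ref{Thompson}, and that $N C_G(y)$ is a Frobenius group with complement $C_G(y)$ are all correct and correspond to Lemmas~\ref{p element}, \ref{lem:Fit1} and \ref{lem:Frob1}. However, the proposal stops exactly where the real work begins: the passage from ``$NC_G(y)$ is Frobenius'' to ``$G$ is Frobenius'' is only announced as a plan. In the paper this step occupies Lemmas~\ref{cyclic sylows} and \ref{yF normal} and the whole proof of Theorem~\ref{norm distance}: one shows $\langle y\rangle F\unlhd G$ (a delicate application of Lemma~\ref{Aschbacher} plus coprime action to rule out $O_r(G/F)=1$), applies the Frattini argument to get $G=N_G(\langle y\rangle)F$ with $N_G(\langle y\rangle)\cap F=1$, and then proves $N_G(\langle y\rangle)$ is a TI subgroup, i.e.\ a Frobenius complement. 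None of this is present in your write-up, so as it stands the proof is incomplete at its most difficult point.

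There is also a structural problem with your reduction that would block you even if you tried to import the paper's endgame. You reduce to the claim that every \emph{prime-order} $y$ satisfies $\mathrm d(y,N)\le 2$ or $G$ is Frobenius, and so in the hard case you may only assume $\mathrm d(y,N)\ge 3$. The paper instead keeps the original element $x$ with $\mathrm d(x,N)>3$ and the edge $x\sim y$, and this extra leverage is used essentially: in the TI argument, a non-trivial $h\in N_G(\langle y\rangle)^f\cap N_G(\langle y\rangle)$ of prime order commutes with some $f\in F^{\#}$, and when $h\in C_G(y)$ one invokes Lemma~\ref{odd sol frobenius} to get $\langle h\rangle\unlhd C_G(y)$, hence the edge $x\sim h$ and the forbidden length-$3$ path $x\sim h\sim f\sim N$. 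In your framework the corresponding path is $y\sim h\sim f\sim N$, which has length $3$ and is perfectly compatible with $\mathrm d(y,N)\ge 3$; no contradiction results. So your stated target (distance at most $2$ for prime-order elements, rather than the paper's distance at most $3$ for arbitrary elements) is a strictly stronger statement whose truth you have not established and which your own outline cannot reach by the paper's method. You should either keep $x$ and the hypothesis $\mathrm d(x,N)>3$ throughout, as the paper does, or supply a genuinely new argument for the case $h\in C_G(y)$ and for the verification that every element of $G\setminus F$ acts fixed-point-freely on $F$.
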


 Assume that $N$ is a minimal normal subgroup of $G$. Then $N$ is an elementary abelian $p$-group for some prime $p$.
Our proof of Theorem \ref{norm distance} begins by considering an element \(x\in G\setminus N\) such that \(\mathrm{d}(x,N)>3\). This element is fixed for the duration of this section.  The objective is to prove that in this case, $G$ is a Frobenius group and $\Gamma(G)$ is disconnected.  We investigate the properties that   \(x\) must possess.  This allows us to determine the global structure of \(G\). We fix $y \in \langle x \rangle $ of prime order $r$ and note that $x\sim y$ or $x=y$.  Abusing notation we  always write $x\sim y$.    We begin with the following lemma which does not in its first conclusion require $\mathrm{d}(t,N) >3$.

\begin{lemma}\label{lem:odd order} Suppose that $t \in G$ is an involution.  Then $\mathrm{d}(t,N) \le 1$. In particular, $C_G(y)$ and $x$ have odd order.
\end{lemma}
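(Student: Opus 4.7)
The plan is to verify $\mathrm{d}(t,N)\le 1$ by a short case analysis on the prime $p$ for which $N$ is elementary abelian, and then to derive the odd-order consequences by contradicting the standing hypothesis $\mathrm{d}(x,N)>3$.

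For the distance claim, if $t\in N$ then $\mathrm{d}(t,N)=0$, so I would assume $t\notin N$; the goal is to exhibit some $n\in N^\#$ with $t\sim n$. I will split on whether $p=2$ or $p$ is odd. When $p=2$, the subgroup $\langle t\rangle N$ is a non-trivial $2$-group, and the standard fact that a $p$-group acting on a non-trivial $p$-group has non-trivial fixed points yields $C_N(t)\ne 1$; any $n\in C_N(t)^\#$ then commutes with $t$, giving $t\sim n$. When $p$ is odd, $t$ acts coprimely on $N$, so Maschke's theorem gives the $\langle t\rangle$-module decomposition $N=C_N(t)\oplus[N,t]$. If $C_N(t)\ne 1$ the same argument works; otherwise $t$ inverts every element of $N$, and for any $n\in N^\#$ we have $n^t=n^{-1}\in\langle n\rangle$, so $t$ normalises $\langle n\rangle$ and $t\sim n$.

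For the ``in particular'' part, suppose first that $o(x)$ is even. Then some power of $x$ is an involution $t\in\langle x\rangle$; if $x=t$ the first part gives $\mathrm{d}(x,N)\le 1$, and otherwise $x\sim t$ gives $\mathrm{d}(x,N)\le 1+\mathrm{d}(t,N)\le 2$. Either way we contradict $\mathrm{d}(x,N)>3$, so $x$ and hence $y$ has odd order. Next, if $C_G(y)$ contained an involution $t$, then $t$ centralises $y$; since $o(y)$ is odd we have $t\ne y$, so $t\sim y$. Combining this with $x\sim y$ (or $x=y$) yields a path from $x$ to $N$ of length at most $1+1+1=3$, again contradicting $\mathrm{d}(x,N)>3$.

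The main obstacle is modest — essentially just the subcase $C_N(t)=1$ of the coprime-action argument, where one must observe that a fixed-point-free involution on an abelian group acts by inversion — everything else is routine distance-chasing along the cyclic group $\langle x\rangle$ and the centraliser $C_G(y)$.
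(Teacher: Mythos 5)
Your proof is correct, but for the distance claim you take a different route from the paper. The paper argues uniformly in one line: if $t$ does not commute with a non-trivial $n\in N$, then $\langle t,t^n\rangle$ is dihedral, $tt^n=(n^{-1})^tn$ is a non-trivial element of $N$ (using normality of $N$), and $t$ inverts it, so $t\sim tt^n$ -- no case split on $p$ and no appeal to coprime action or fixed-point counting. Your version splits into $p=2$ (fixed points of a $2$-group acting on a $2$-group) and $p$ odd (Maschke decomposition $N=C_N(t)\oplus[N,t]$, with the fixed-point-free subcase handled by the observation that such an involution inverts the abelian group). Both are sound; the observation you flag as the ``main obstacle'' ($nn^t\in C_N(t)$, so $C_N(t)=1$ forces $n^t=n^{-1}$) is exactly the same mechanism the paper packages into the dihedral computation, which is why the paper's argument needs no cases -- $tt^n$ is always a non-trivial element of $N$ inverted by $t$ once $t$ fails to centralise anything. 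The ``in particular'' part of your argument (an involution in $\langle x\rangle$ or in $C_G(y)$ would produce a path of length at most $3$ from $x$ to $N$) matches the paper's.
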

\begin{proof} Let \(n\in N\) be non-trivial.  We may assume $t \not \in N$ and that $t$ does not commute with $n$.
    Then \(\langle t,t^{n}\rangle\) is  a dihedral group.  Furthermore, \(\langle tt^{n} \rangle \)  is normal in \(\langle t,t^{n}\rangle\) and $$tt^{n}=tn^{-1}tn=(n^{-1})^{t}n\in N.$$ So  \( t \sim tt^{n}\in N\) is an edge . It follows that $\mathrm{d}(t,N) \le 1$.

Suppose that $t$ can be chosen in $C_N(y)$.   Then $x\sim y\sim t \sim N$ exhibits a path of length at most $3$ to $N$,  which is a contradiction as $\mathrm d(x,N)>3$. Hence $C_G(y)$ and $x$ have odd order.
\end{proof}
\begin{lemma}\label{p element}
    Let \(z\) be an element of order a power of \(p\). Then \(\mathrm{d}(z,N)\leq 1\).  In particular, \(o(x)\) is coprime to \(p\).
\end{lemma}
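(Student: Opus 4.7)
The plan is to first establish the claim for a general non-trivial $p$-element $z$, and then deduce the ``in particular'' consequence by specialising to a power of $x$.

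For the main claim, I would form the subgroup $H = N\langle z\rangle$. Since $N$ is normal in $G$, this is indeed a subgroup of $G$, and since both $N$ and $\langle z\rangle$ are $p$-groups, $H$ is itself a $p$-group. Invoking the standard fact that a non-trivial normal subgroup of a $p$-group meets the centre non-trivially then gives $N \cap Z(H) \ne 1$. I would choose any $n \in (N \cap Z(H))^{\#}$. By construction $z$ centralises $n$, so the cyclic subgroups $\langle n\rangle$ and $\langle z\rangle$ normalise each other. If $z \in N$ there is nothing to prove as $\mathrm{d}(z,N)=0$; otherwise $z \ne n$ and $z \sim n$ is an edge of $\Gamma(G)$, so $\mathrm{d}(z,N) \le 1$, as required.

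For the ``in particular'' part, I would argue by contradiction. Assume that $p$ divides $o(x)$ and let $z$ denote the $p$-part of $x$, which is a non-trivial $p$-element of $\langle x\rangle$. Since $x \notin N$ by the standing assumption $\mathrm{d}(x,N)>3$, the first part of the lemma gives $\mathrm{d}(z,N)\le 1$. If $z=x$, this already forces $\mathrm{d}(x,N)\le 1$; otherwise $z$ commutes with $x$, both cyclic subgroups $\langle z\rangle$ and $\langle x\rangle$ normalise each other, so $x \sim z$ is an edge and there is a path $x\sim z\sim n$ of length at most $2$. Either conclusion contradicts $\mathrm{d}(x,N)>3$, and hence $o(x)$ must be coprime to $p$.

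The argument is essentially immediate once one observes that $N\langle z\rangle$ is a $p$-group, so I do not anticipate any real obstacle. The only mild point of care is bookkeeping of the degenerate cases $z\in N$ and $z=x$, which would otherwise produce spurious ``self-loops'' in $\Gamma(G)$; these are handled by the one-line case splits above.
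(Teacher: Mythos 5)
Your proposal is correct and follows essentially the same argument as the paper: the paper places $z$ in a Sylow $p$-subgroup $P$ containing $N$ and uses $N\cap Z(P)\neq 1$, whereas you use the smaller $p$-group $N\langle z\rangle$, but the key fact (a non-trivial normal subgroup of a $p$-group meets the centre) and the deduction of the ``in particular'' clause via a path $x\sim z\sim N$ are identical.
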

\begin{proof}
    Since \(z\) has order a power of $p$, \(z\in P\) for some \(P\in \mathrm{Syl}_{p}(G)\). Furthermore, \(N\unlhd P\). Since \(N\cap Z(P)\neq 1\), we obtain the path \(z \sim N\).
     Suppose that \(p$  divides $o(x)\). Then there exists \(w\in \langle x \rangle \) such that \(o(w)=p\). We then discover the path \(x \sim w \sim N\) which is impossible. Thus \(o(x)\) is coprime to \(p\).
\end{proof}

Next, we determine some properties of the subgroup structure of \(G\) first beginning with  the embedding of $F=F(G)$ in $G$.

\begin{lemma}\label{lem:Fit1} We have  \(F=C_{G}(N)\) and \(C_{G}(  y )\cap F=1\).  In particular, $o(y)$ and $|F|$ are coprime.
\end{lemma}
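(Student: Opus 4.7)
The plan is to first establish $F \le C_G(N)$ by a standard prime-by-prime argument, and then identify $H = C_G(N)$ as a nilpotent normal subgroup by applying Thompson's theorem (Lemma~\ref{Thompson}) to the conjugation action of $y$. For the first inclusion, when $q \ne p$ we have $[O_q(G), N] \le O_q(G) \cap N = 1$, and when $q = p$ the subgroup $N \cap Z(O_p(G))$ is a non-trivial (as $N$ meets the centre of the $p$-group $O_p(G)$) normal subgroup of $G$ contained in $N$, so by minimality $N \le Z(O_p(G))$. Hence $F = \prod_q O_q(G) \le C_G(N) = H$.

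Set $H = C_G(N)$. I would first check that $y \notin H$: since $o(y)$ is coprime to $p$ by Lemma~\ref{p element}, $y \notin N$, and if $y$ centralised $N$ then $y \sim n$ for each $n \in N^{\#}$, producing a path $x \sim y \sim N$ of length at most $2$, contradicting $\mathrm d(x, N) > 3$. Next I would show $C_H(y) = 1$: any non-trivial fixed point $f$ would differ from $y$ (as $f \in H$ while $y \notin H$) and satisfy $y \sim f$, while $f \in H$ means either $f \in N^{\#}$ or $f \sim n$ for any $n \in N^{\#}$, producing a path from $x$ to $N$ of length at most $3$, again a contradiction.

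With this in place, $y$ acts on $H$ by conjugation. In a soluble group $C_G(F) \le F$, and since $F \le H$ we obtain $C_G(H) \le C_G(F) \le F \le H$, so $y \notin H$ forces $y \notin C_G(H)$, and the induced automorphism of $H$ therefore has full prime order $r$. Combined with $C_H(y) = 1$, this is a fixed-point-free automorphism of prime order, so Lemma~\ref{Thompson} yields that $H$ is nilpotent. Being normal in $G$, $H \le F$, so $F = H = C_G(N)$, and $C_G(y) \cap F = C_H(y) = 1$ is then immediate. For the coprimality assertion, if $r$ divided $|F|$ then $O_r(G) \ne 1$ would be a non-trivial normal subgroup of the $r$-group $O_r(G)\langle y\rangle$, hence meet its centre in a non-trivial element $z \in F \cap C_G(y) = 1$, a contradiction.

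The main obstacle I expect is recognising that Thompson's theorem is the correct engine for upgrading $C_H(y) = 1$ into nilpotence of $H$; once that is spotted, the supporting facts $y \notin H$ and $C_H(y) = 1$ are both short-path arguments against $\mathrm d(x, N) > 3$, and the verification that $y$ induces a non-trivial automorphism reduces to the standard soluble-group identity $C_G(F) \le F$.
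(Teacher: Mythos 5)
Your proposal is correct and follows essentially the same route as the paper: the key step in both is that any non-trivial fixed point of $y$ in $C_G(N)$ would yield a path $x\sim y\sim z\sim N$ of length at most $3$, so $y$ acts fixed-point-freely and Thompson's theorem (Lemma~\ref{Thompson}) forces $C_G(N)$ to be nilpotent, hence contained in $F$. The only cosmetic differences are that you prove $F\le C_G(N)$ prime by prime rather than via $1\ne N\cap Z(F)\unlhd G$, and you spell out the coprimality argument that the paper absorbs into its citation of Thompson's theorem.
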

\begin{proof} Suppose that $y$ centralises a non-trivial element $z$  of $C_G(N)$.  Then we have a path \(x\sim y\sim z \sim N\), contradicting our choice of \(x\). Hence $y$ acts fixed-point-freely on $C_G(N)$ and Lemma \ref{Thompson} yields $C_G(N)$ is nilpotent and $o(y)$ and $|F|$ are coprime.  Hence $N\le C_G(N)\le F$ and $C_G(y) \cap C_G(N)=1$.
      As \(F\) is nilpotent and \(N\unlhd F\) we know that \(1\neq N\cap Z(F) \unlhd G\). However, \(N\)  is a minimal normal subgroup  of  \(G\) and \(N\cap Z(F) \) is normal in $G$. Thus \(N\leq Z(F)\) and \(F\leq C_{G}(N)\). Hence \(F=C_{G}(N)\). This concludes the proof.
\end{proof}

\begin{lemma}\label{lem:Frob1}
       We have \(C_G(y)N\) is a Frobenius group with complement \(C_{G}(y)\) of odd order.
\end{lemma}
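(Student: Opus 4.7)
The plan is to verify the two defining properties of a Frobenius group for $H := C_G(y)N$ with kernel $N$ and complement $C_G(y)$: first that $H$ is a semidirect product $N \rtimes C_G(y)$, and second that the conjugation action of $C_G(y)$ on $N$ is fixed-point-free on $N^\#$. The odd order of $C_G(y)$ has already been recorded in Lemma~\ref{lem:odd order}, so nothing needs to be said there.

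For the semidirect product structure, I would observe that $N \trianglelefteq G$ and hence $N$ is normal in $H$. By Lemma~\ref{lem:Fit1} we have $N \le F = C_G(N)$ while $C_G(y) \cap F = 1$, so $C_G(y) \cap N = 1$, and therefore $H = N \rtimes C_G(y)$.

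The main step, and the only place where the hypothesis $\mathrm{d}(x, N) > 3$ is used, is ruling out fixed points. Suppose for contradiction that some $c \in C_G(y)^\#$ centralises a non-trivial element $n \in N$. Then I get the chain
\[
x \sim y \sim c \sim n,
\]
where every adjacency arises from outright commutation: $y \in \langle x\rangle$ commutes with $x$; $c$ commutes with $y$ since $c \in C_G(y)$; and $c$ commutes with $n$ by assumption. Distinctness of $c$ and $n$ is automatic since $c \in C_G(y)$ and $n \in N$ with $C_G(y) \cap N = 1$; the degenerate possibilities $x = y$ or $c = y$ merely shorten the path. In every case this exhibits $\mathrm{d}(x, N) \le 3$, contradicting the standing hypothesis. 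Hence $C_N(c) = 1$ for every $c \in C_G(y)^\#$, which is exactly the Frobenius condition.

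I do not expect a real obstacle here; the only care needed is bookkeeping about whether $c$ coincides with $y$ or $x$ coincides with $y$, but each such collapse only produces a shorter witnessing path, so the contradiction is robust. The substance of the lemma is packaged entirely into Lemma~\ref{lem:Fit1} and the path argument above.
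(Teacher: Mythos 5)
Your proof is correct and is essentially the same as the paper's: the crux in both is that a non-trivial element of $C_G(y)$ centralising a non-trivial element of $N$ would produce the forbidden path $x\sim y\sim c\sim n$ of length at most $3$. The only cosmetic difference is that you invoke the standard equivalence between fixed-point-free action and the Frobenius property, whereas the paper verifies the malnormality condition $C_G(y)^w\cap C_G(y)=1$ directly and reduces it to the same centralising statement via $[d,n]\in C_G(y)\cap N=1$.
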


\begin{proof}
Let $w \in C_G(y)N\setminus C_G(y)$.  Then $w= cn$ for some $c \in C_G(y)$ and non-trivial $n \in N$. We have $C_G(y)^w= C_G(y)^{cn}=C_G(y)^n$.

Assume that $d \in C_G(y)^w \cap C_G(y) =C_G(y)^n \cap C_G(y)$ is non-trivial.  Then $d^{n^{-1}},d\in C_G(y)$. This means that $$d^{-1}ndn^{-1} =n^dn^{-1}\in  C_G(y)\cap N \le C_G(y)\cap F =1$$ by Lemma~\ref{lem:Fit1}. Therefore $d$ centralises $n$ and  $$x\sim y \sim d \sim N,$$ a contradiction. Hence $C_G(y)^w\cap C_G(y)=1$ for all $w \in C_G(y)N\setminus C_G(y)$ and so $C_G(y)N$ is a Frobenius group with Frobenius complement $C_G(y)$. Furthermore, $|C_G(y)|$ is odd by Lemma~\ref{lem:odd order}.
\end{proof}

 Recall that $y$ has prime order $r$.

\begin{lemma}\label{cyclic sylows}
    The  Sylow \(r\)-subgroups of \(G\) are cyclic.
\end{lemma}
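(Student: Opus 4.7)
The plan is to establish cyclicity of \(R\) (a Sylow \(r\)-subgroup containing \(y\)) in three stages: first, noting that \(r\) is odd; second, exploiting the Frobenius complement structure from Lemma~\ref{lem:Frob1} to force \(C_R(y)\) to be cyclic; and third, deriving a contradiction if \(R\) itself is not cyclic.

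For the first stage, Lemma~\ref{lem:odd order} gives that \(x\) has odd order, so the prime \(r=o(y)\) is odd. For the second stage, Lemma~\ref{lem:Frob1} tells us that \(C_G(y)\) is a Frobenius complement, so by Lemma~\ref{Frobenius}(iv) (since \(r\) is odd) its Sylow \(r\)-subgroups are cyclic. As \(C_R(y)=R\cap C_G(y)\) is one such Sylow subgroup, \(C_R(y)\) is cyclic.

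For the third stage, I would suppose for contradiction that \(R\) is not cyclic. By Lemma~\ref{no non cyclic subgroup in P group}, and because \(r\) odd rules out the generalised quaternion case, \(R\) has a non-cyclic abelian subgroup. A classical result on \(p\)-groups for odd \(p\) (see \cite[Theorem~5.4.10]{MR0569209}) then produces a normal subgroup \(A\cong C_r\times C_r\) of \(R\). Now \(y\) normalises \(A\), and since \(y\) is an \(r\)-element acting on the elementary abelian \(r\)-group \(A\), a simple orbit-counting argument yields \(C_A(y)\neq 1\). I would then pick \(a\in C_A(y)^{\#}\); this \(a\) has order \(r\) and commutes with \(y\), so \(\langle a\rangle\) is an order-\(r\) subgroup of the cyclic group \(C_R(y)\), forcing \(\langle a\rangle=\langle y\rangle\) by uniqueness of subgroups of a given order in a cyclic group. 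In particular \(y\in\langle a\rangle\le A\); since \(A\) is abelian this gives \(A\le C_R(y)\), so \(A\) must be cyclic, contradicting \(A\cong C_r\times C_r\). Hence \(R\) is cyclic.

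The main obstacle I foresee is invoking the classical theorem that a non-cyclic \(r\)-group (for \(r\) odd) contains a normal subgroup of type \((r,r)\); although well-known, this is not among the paper's explicit preliminaries and must be cited from Gorenstein's book. Everything else in the argument is routine: the cyclicity of \(C_R(y)\) comes straight from the Frobenius complement structure already in hand, and the contradiction is obtained by producing just one order-\(r\) element in \(C_R(y)\) outside \(\langle y\rangle\), which the normal \((r,r)\)-subgroup \(A\) supplies.
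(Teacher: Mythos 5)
Your proof is correct and follows essentially the same route as the paper, which also deduces a contradiction from Lemma~\ref{lem:Frob1} together with Lemma~\ref{Frobenius}(iv) by extracting from a non-cyclic $R$ an elementary abelian rank-two configuration centralising $y$. You merely supply the details the paper leaves implicit (the normal subgroup of type $(r,r)$ and the fixed-point argument), and your appeal to $C_R(y)$ being a full Sylow $r$-subgroup of $C_G(y)$ is unnecessary --- it suffices that it is an $r$-subgroup of the Frobenius complement and hence lies in a cyclic group.
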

\begin{proof}
    Let  \(R\in \mathrm{Syl}_{r}(G)\) with $y \in R$.  If \(R\) is not cyclic we can find an element \(z\in R\) of order \(r\) such that \(\langle z,y\rangle\) is elementary abelian by Lemma~\ref{no non cyclic subgroup in P group}.   This contradicts the  combination of Lemmas~\ref{Frobenius} (iv)   and \ref{lem:Frob1}. 
\end{proof}
\begin{lemma}\label{yF normal}
     \(\langle y\rangle F \unlhd G\)
\end{lemma}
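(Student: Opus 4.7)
The plan is to prove $\langle y\rangle F \unlhd G$ by passing to $\bar G := G/F$: since $F$ is characteristic in $G$, it suffices to show that the image $\langle \bar y\rangle$ of $\langle y\rangle$ in $\bar G$ is normal, where bars denote images modulo $F$.

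First I would verify that $\langle y\rangle F$ is itself a Frobenius group with nilpotent kernel $F$ and cyclic complement $\langle y\rangle$ of prime order $r$. By Lemma~\ref{lem:Fit1}, $\gcd(o(y),|F|)=1$ and $C_F(y)\le C_G(y)\cap F = 1$, so $y$ acts fixed-point-freely on the nilpotent normal subgroup $F$. Using Lemma~\ref{cyclic sylows}, I would let $R$ be the cyclic Sylow $r$-subgroup of $G$ containing $\langle y\rangle$; then $\langle y\rangle$ is the unique subgroup of order $r$ in $R$, so every non-trivial $w\in R$ satisfies $y\in\langle w\rangle$ and hence $C_F(w)\le C_F(y) = 1$. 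Consequently $RF$ is itself a Frobenius group with kernel $F$ and complement $R$, and $\langle y\rangle F$ is characteristic in $RF$ as the unique subgroup containing $F$ of index $|R|/r$. It therefore suffices to show $RF\unlhd G$, equivalently $\bar R := RF/F \unlhd \bar G$.

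The arithmetic input I would extract from $\mathrm{d}(x,N)>3$ is the constraint $r\nmid p-1$. Otherwise $x^r-1$ splits into distinct linear factors over $\mathbb{F}_p$ (since $\gcd(r,p)=1$ by Lemma~\ref{p element}), and $y$ acting on the $\mathbb{F}_p$-vector space $N$ admits a non-zero eigenvector $v\in N$ with eigenvalue $\lambda\in\mathbb{F}_p^\times\setminus\{1\}$ (not $1$, as $y$ acts fpf on $N\le F$). Then $y$ normalises $\langle v\rangle\le N$, producing the path $x\sim y\sim v$ of length at most $2$ to $N$, contradicting the hypothesis. So $r\nmid p-1$, and every order-$r$ element of $\bar G$ (which by Sylow is a power of a conjugate of $\bar y$) acts on $N$ with no $\mathbb{F}_p$-eigenvector: each irreducible $\mathbb{F}_p\langle y\rangle$-summand of $N$ has dimension $\mathrm{ord}_r(p)>1$.

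The main obstacle is the final step: using the faithful embedding $\bar G \le \mathrm{GL}(N)$ (provided by $F=C_G(N)$ in Lemma~\ref{lem:Fit1}) together with the eigenvector-freeness just established to force $\bar R \unlhd \bar G$. The rigid block structure imposed on order-$r$ elements inside $\mathrm{GL}(N)$, combined with a Frattini/transfer-type analysis of the cyclic Sylow $r$-subgroup and the coprimality $\gcd(r,|F|)=1$ (used via a coprime-action fixed-point argument to identify $C_{\bar G}(\bar y)$ with $C_G(y)F/F$), should force all conjugates of $\bar R$ to coincide. Once $\bar R \unlhd \bar G$, the subgroup $\langle y\rangle F$, being characteristic in $RF$, is normal in $G$.
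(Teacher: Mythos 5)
Your proposal has a genuine gap at the decisive step. The preliminary material is fine: $\langle y\rangle F$ and $RF$ are indeed Frobenius groups with kernel $F$, $\langle y\rangle F$ is characteristic in $RF$, and the eigenvalue argument showing $r\nmid p-1$ is valid (if $r\mid p-1$ then $y$ would stabilise a one-dimensional subspace of $N$, giving $\mathrm d(x,N)\le 2$). But the lemma is then supposed to follow from ``forcing all conjugates of $\bar R$ to coincide,'' and you offer only the assertion that a rigid block structure combined with a Frattini/transfer-type analysis ``should force'' this. No such argument is given, and it is the entire content of the lemma. Worse, you have reduced to a statement ($\bar R\unlhd\bar G$, i.e.\ the full Sylow $r$-subgroup of $G/F$ is normal) that is strictly stronger than what is needed: a priori $O_r(\bar G)$ could be a proper subgroup of the cyclic group $\bar R$ with $\bar R$ non-normal, and nothing you have said excludes this. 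The cleaner target --- which you state at the outset and then abandon --- is $\langle\bar y\rangle\unlhd\bar G$, which, because the Sylow $r$-subgroups of $\bar G$ are cyclic, is equivalent to $O_r(\bar G)\ne 1$.

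The paper closes exactly this gap by a mechanism your constraints ($r\nmid p-1$ and the faithful action of $\bar G$ on $N$) do not obviously replace. It supposes $O_r(\bar G)=1$; then, since $C_{\bar G}(F(\bar G))\le F(\bar G)$, the element $\bar y$ must act non-trivially on $O_t(\bar G)$ for some prime $t\ne r$. Applying Lemma~\ref{Aschbacher} to the action of $O_t(\bar G)\langle \bar y\rangle$ on $N$, with $C_N(y)=1$ from Lemma~\ref{lem:Fit1}, forces $t=2$ and forces $\bar y$ to centralise $Z(O_2(\bar G))$; coprime action then lifts this to an element of even order in $C_G(y)$, contradicting Lemma~\ref{lem:odd order}. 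This interplay between the fixed-point theorem on the module $N$ and the oddness of $C_G(y)$ is the missing idea; without it, or a genuine substitute, your proof does not go through.
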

\begin{proof}
     Let \(H=F(G/F)\) and  \(\overline{G}=G/F\). Then $$H=\left(\Pi_{t\in\pi(H)\setminus{\{r\}}}O_{t}(\overline{G})\right)\times O_{r}(\overline{G}).$$  By Lemma~\ref{cyclic sylows}, the Sylow \(r\)-subgroups of \(G\) are cyclic so this is also the case for \(\overline{G}\). Hence it suffices to show that \(O_{r}(\overline{G})\ne 1\). Suppose for a contradiction that \(O_{r}(\overline{G})=1\). Then there exists \(t\in \pi(H)\) such that \(\overline{\langle y\rangle }\) does not centralise \(O_{t}(\overline{G})\). We know \(O_{t}(\overline{G})\langle yF\rangle\) acts on \(N\).

     Since $C_N(y)=1$ by Lemma~\ref{lem:Fit1}, Lemma \ref{Aschbacher} implies that $t=2$ and \(yF\) centralises $Z(O_{2}(\overline{G}))$.
     Let $ Z>F $ be such that $ \overline Z= Z(O_{2}(\overline{G}))$. Then $Z$ is normalised by $y$ and, as $r\ne t$,  Lemma~\ref{lem:Fit1}  shows $|Z|$ and $o(y)$ are coprime. Since $yF$ centralises $\overline Z$, coprime action yields $C_G(y)$ has even order and this contradicts Lemma~\ref{lem:odd order}.
        Hence  \(\langle y\rangle F \unlhd G\), as claimed.
\end{proof}

We are now in a position to prove Theorem \ref{norm distance}.
\begin{proof}[Proof of  Theorem \ref{norm distance}.]
    We continue to suppose that \(x\in G\setminus N\) is such that \(\mathrm{d}(x,N)>3\)  and $y \in \langle x\rangle$ has prime order.  We  show that \(N_{G}(\langle y\rangle)\) is a Frobenius complement to \(F\).  If $w \in N_F(\langle y\rangle)$, then $[w,y]\in F \cap \langle y\rangle =1$. Thus $N_G(\langle y\rangle) \cap F= C_G(y) \cap F=1$ by Lemma~\ref{lem:Fit1}.  As $\langle y \rangle$ is a Sylow $r$-subgroup of \(\langle y\rangle F\),    the Frattini argument shows that \(G=N_{G}(\langle y\rangle)F\).  In particular, $N_G(\langle y\rangle)$ is a complement to $F$ in $G$. We now show that it is a Frobenius complement.

    Let $g \in G\setminus N_G(\langle y \rangle)$ and write $g=wf$ with $w \in N_G(\langle y \rangle)$ and $f$ a non-trivial element in $ F$. Then $ N_G(\langle y \rangle)^g= N_G(\langle y \rangle)^f$  and $ N_G(\langle y \rangle)^g\cap  N_G(\langle y \rangle)=  N_G(\langle y \rangle)^f \cap  N_G(\langle y \rangle)$. Assume that $h $ is a non-trivial element of prime order in this intersection.  Then $h^{f^{-1}}, h \in  N_G(\langle y \rangle)$. Consequently $fhf^{-1}h^{-1} \in N_G(\langle y \rangle)\cap F=1$. Hence $f$ commutes with $h$. Furthermore, both $f$ and $h$ are non-trivial.  As $N \le Z(F)$,  we now have a path $$x\sim y \sim h\sim f\sim N.$$  If $h\not \in C_G(y)$, then $\langle h\rangle \langle y\rangle$ acts on $N$. Since the Sylow $r$-subgroups of $G$ are cyclic by Lemma~\ref{cyclic sylows},  the order of  $h$ is coprime to $r$.  Hence Lemma~\ref{Aschbacher} yields a path $x\sim y \sim h \sim N$, which is impossible.  Thus $h$ centralises $y$. Since $C_G(y)$ is an odd order Frobenius complement by Lemma ~\ref{lem:Frob1}, we can apply Lemma \ref{odd sol frobenius} to see that \(\langle h\rangle\) is normal in \(C_{G}(\langle y\rangle)\). Hence the path above can be shortened to \(x \sim h \sim f \sim N\). Once again this contradicts our choice of \(x\).  Therefore $ N_G(\langle y \rangle)^g\cap  N_G(\langle y \rangle)=1$ for all $g \in G\setminus N_G(\langle y\rangle)$ and so $G$ is a Frobenius group with Frobenius complement $N_G(\langle y\rangle)$ as claimed.
     This completes the proof of the theorem.

\end{proof}

\section{Bounding the diameter of the normalising graph }

The objective of this section is to establish parts (i) and (ii) of Theorem~\ref{Diam 6}. The main struggle is to show that when $G$ is not a Frobenius group, then the diameter of $\Gamma(G)$ is not $7$.

\begin{proof}[Proof of Theorem \ref{Diam 6} (i) and (ii)] Suppose that $G$ is not a Frobenius group and that the diameter of $\Gamma=\Gamma(G)$ is at least $7$. We seek a contradiction. Set $F=F(G)$ and  $\pi=\pi(F)$.  Then $G>F$ for otherwise $G$ has diameter at most $2$.  Let $N$ be a minimal normal subgroup of $G$.
Then $\mathrm{d}(x,N)\le 3$ for all $x \in G$ by Theorem~\ref{norm distance} and $\Gamma$ is connected.   Assume that    \(\mathrm{d}(x,x^{*})\ge 7\)  for some $x, x^*\in G$.
Then there exist $a,a^*,b,b^* \in G^\#$ of prime order and  paths $$x\sim a\sim b \sim n$$ and $$x^*\sim a^*\sim b^*\sim n^*$$ with $n, n^* \in N$ and $n \ne n^*$ for otherwise $\mathrm d(x,x^*)\le 6$. Because $N$ is abelian, $n\sim n^*$ and so $\mathrm{d}(x,x^{*})= 7$ and
  $$x\sim a\sim b \sim n\sim n^*\sim b^*\sim a^*\sim x^*.$$

  If $b$ can be chosen in $C_G(N)$, then $b$ is adjacent to all elements of $N$ and   we truncate the path to
  $$x\sim a\sim b \sim n^*\sim b^*\sim a^*\sim x^*$$ to obtain the contradiction
  $\mathrm d(x,x^*)\le 6$. Hence we must have \begin{equation}\label{clm1} \mathrm d(x,C_G(N))=3 =\mathrm d( x^*,C_G(N)).\end{equation}
In particular, for all $y \in \langle x\rangle$ of prime order,\begin{equation}\label{clm2} C_G(y) \cap C_G(N)=C_N(y)=1.\end{equation}

By (\ref{clm2}), all elements of  $\langle x \rangle$ of prime order act fixed-point-freely on $C_G(N)$
  and so Lemma~\ref{Thompson} implies that $C_G(N)$ is nilpotent. As $N \le Z(F)$, we conclude that \begin{equation}\label{clm3}F =C_G(N).\end{equation}

 If $r\in \pi$ divides $o(x)$, then we may choose $y\in \langle x\rangle$ of order $r$ such that, using  (\ref{clm3}), $$1\ne C_{O_r(F)}(y)\le C_F(y)= C_G(y) \cap F = C_G(y) \cap C_G(N)$$ and this contradicts (\ref{clm2}).  Hence $x$, and similarly $x^*$, have order coprime to  $|F|$.
\begin{equation}\label{clm3.5} \text{The elements }x \text{ and }   x^*\text{ have order coprime to  }|F|.\end{equation}

   Because  $G>F$,  there exists $t \in \pi(G)$   such that $O_t(G/F) \ne 1$. Let $K$ be the preimage of $O_t(G/F)$, $T \in \mathrm{Syl}_t(K)$, and $H=N_G(T)$. As $K$ is normal in $G$, the Frattini argument implies $G= KH= FTH= FH$.

      Since $G=FH$, $H$
 contains a Hall $\pi'$-subgroup of $G$.    Therefore we may assume that $$x \in H=N_G(T)\text{ and } x^*\in H^g=N_G(T^g)$$  for some $g \in F=C_G(N)$.

We claim
\begin{equation}\label{clm4} t \not \in \pi.\end{equation}

    If $t \in \pi$, then $T \cap F \ne 1$ and we may choose  a minimal normal subgroup  $M$ of $G$ with $M \le O_t(F) \le T$.  Hence $C_M(T)=Z(T) \cap M \ne 1$ and, as $TF$ is normal in $G$, we deduce   that $F<TF\le C_G(M)$.  However,   $M$ is a candidate for our initial choice of $N$, and so (\ref{clm3}) applies to $M$, and this yields the contradiction $F<C_G(M)=F$   and  proves (\ref{clm4}).

\medskip

As $b \not \in C_G(N)= F$ by (\ref{clm1}) and (\ref{clm3}), $b \sim n$ implies $b\in N_G(\langle n \rangle)\setminus  F$. Hence  $N_G(\langle n \rangle)> F$ and
 \[N_{G}(\langle n\rangle) = N_{G}(\langle n\rangle)\cap G = N_{G}(\langle n\rangle)\cap FH = F(N_{G}(\langle n\rangle)\cap H). \]
   This implies that \(N_{G}(\langle n\rangle)\cap H \neq 1\). A similar argument shows that \(N_{G}(\langle n\rangle)\cap H^{g} \neq 1\).
Hence we may choose non-trivial elements
$w \in  N_{G}(\langle n\rangle)\cap H$ and $w^* \in  N_{G}(\langle n\rangle)\cap H^g$.
Thus $$w\sim n \sim w^*.$$

Suppose that   $Z(T)=\langle u\rangle$ is cyclic. Then  $x$ and $w$ normalise $\langle u \rangle$  and $x^*$ and $w^*$ normalise $\langle u^g\rangle$.
Therefore we obtain a modified path $$x\sim u \sim w\sim n \sim w^* \sim u^g \sim x^*,$$ which is impossible as $\mathrm d(x,x^*)=7$. We conclude that
\begin{equation}\label{clm5}Z(T)\text{ is not cyclic}.\end{equation}

\medskip

Assume that $x$ is a $t$-element and $y\in \langle x\rangle$ has order $t$. If $t=2$, then $y\sim N$ by Lemma~\ref{lem:odd order}, contrary  to $\mathrm{d}(x,N)= 3$ by (\ref{clm1}). Hence $t$ is odd.  As $x \in H= N_G(T)$, $T\langle x\rangle$ is a $t$-group.  Set $U=  Z(T)\langle x\rangle$.  If \(N_{U}(\langle x \rangle)\) is not cyclic, there exist \(\ell\in N_{U}(\langle x \rangle)\) such that \(\langle y,\ell\rangle\) is elementary abelian of order $t^2$ by Lemma~\ref{no non cyclic subgroup in P group}.  By Lemma \ref{cyclic subgroups}, we can find some non-trivial \(s\in \langle y,\ell\rangle \le  N_{U}(\langle x \rangle)\) such that \(C_{N}(s)\neq 1\). Because $\langle x\rangle$ is normalised by $\langle s\rangle$, this results in the path \(x\sim s \sim N\) contrary to (\ref{clm1}).  Consequently  \( N_{U}(\langle x \rangle )\) is cyclic.
However, this means that \(N_{U}(N_{U}(\langle x \rangle))=N_{U}(\langle x \rangle)\). Therefore \(\langle x \rangle \unlhd
     U=N_U(\langle x\rangle)$ which is cyclic.  But $Z(T) \le U$ is not cyclic by (\ref{clm5}), and so this is impossible. Thus $x$ is not a $t$-element.  A similar argument works to show that $x^*$ is not a $t$-element. Hence
     \begin{equation}\label{clm6}x \text{ and } x^* \text{ are not }t\text{-elements.}\end{equation}

     By (\ref{clm6})  there exists \(r\in\pi(\langle x \rangle)\setminus\{t\}\) and  \(r^*\in\pi(\langle x^* \rangle)\setminus\{t\}\).
      Let $y \in \langle x \rangle$ have order $r$  and $y^* \in \langle x^* \rangle$ have order $r^*$. If $[Z(T),y]\ne 1$, then, as $C_G(N)=F$, $\{r,t\} \cap \pi=\emptyset$ by (\ref{clm3.5}) and (\ref{clm4}) and $Z(T)\langle y \rangle \cap F=1$,  Lemma \ref{Aschbacher} implies that $C_N(y)\ne 1$, a contradiction to (\ref{clm2}).  Thus $Z(T)$ and $y$ commute. Similarly,  $[Z(T^g),y^*]=1$.

     \begin{equation}\label{clm7} Z(T) \le C_G(y)\text { and } Z(T^g) \le C_G(y^*).\end{equation}

     Since $Z(T)$ is not cyclic by (\ref{clm5}), Lemma \ref{cyclic subgroups} implies that there exists $u\in Z(T)^\#$ such that $C_N(u)\ne 1$. Let $m \in C_N(u)^\#$.  Recall now that $g\in F=C_G(N)$ and so $g$ centralises $N$ and, in particular, $m= m^g\in C_N(u^g)$.

       As $u$ commutes with $y$ and $u^g$ commutes with $y^*$  by (\ref{clm7}), we have $$x\sim y \sim u \sim m= m^g\sim u^g\sim y^*\sim x^*$$ which again yields the contradiction $\mathrm d (x,x^*)\le 6$. This is our final contradiction.
        We conclude that \(\mathrm{diam}(\Gamma ) \leq 6\).

       We have shown that $G$ is either a Frobenius group or $\Gamma$ has diameter at most $6$. In the case that $G$ is a Frobenius group, Lemma~\ref{connected components} and Proposition~\ref{disconnected} provide the additional information given in Theorem~\ref{Diam 6} (i). This completes the proof of parts (i) and (ii) of   Theorem~\ref{Diam 6}.
\end{proof}

\section{A soluble group with permuting graph of diameter $6$}
In this section we present an example of a finite soluble group whose permuting graph has diameter \(6\). We briefly outline our approach.

Let \(N\) be the \(6\)-dimensional vector space over the field of \(5\) elements and define \(H \le \mathrm{GL}(N)\) to be the group  $\langle t_1, t_2, x\rangle$ where
$$t_1=\left(\begin{smallmatrix}
    1 & 0 & 0 & 0 & 0 & 0\\
    0 & 1 & 0 & 0 & 0 & 0\\
    0 & 0 & -1 & 0 & 0 & 0\\
    0 & 0 & 0 & -1 & 0 & 0\\
    0 & 0 & 0 & 0 & -1 & 0\\
    0 & 0 & 0 & 0 & 0&-1
\end{smallmatrix}\right)\; t_2=
\left(\begin{smallmatrix}
    -1 & 0 & 0 & 0 & 0 & 0\\
    0 & -1 & 0 & 0 & 0 & 0\\
    0 & 0 & -1 & 0 & 0 & 0\\
    0 & 0 & 0 & -1 & 0 & 0\\
    0 & 0 & 0 & 0 & 1 & 0\\
    0 & 0 & 0 & 0 & 0 & 1
\end{smallmatrix}\right)\text{  and } x=
\left(\begin{smallmatrix}
    0 & 0 & -1 & 1 & 0 & 0\\
    0 & 0 & -1 & 0 & 0 & 0\\
    0 & 0 & 0 & 0 & 1 & 0\\
    0 & 0 & 0 & 0 & 0 & 1\\
    1 & 0 & 0 & 0 & 0 & 0\\
    0 & 1 & 0 & 0 & 0 & 0
    \end{smallmatrix}\right).$$
  Then $t_1$ and $t_2$ are involutions, $x$ has order $9$ and $H$ has order $36=2^2.3^2$. Set \(G=N\rtimes H\) which has order $5^6.3^2.2^2$.
We calculate that \(x\) acts  fixed-point-freely on \(N\). Furthermore, \(N_{H}(\langle x\rangle )\) does not contain any of the involutions in \(H\) (or any of the elements of order \(6\)). Power $x$ to get that \[x^{3}=\left(\begin{smallmatrix}
     -1 & 1 & 0 & 0 & 0 & 0\\
    -1 & 0 & 0 & 0 & 0 & 0\\
    0 & 0 & -1 & 1 & 0 & 0\\
    0 & 0 & -1 & 0 & 0 & 0\\
    0 & 0 & 0 & 0 & -1 & 1\\
    0 & 0 & 0 & 0 & -1&0
 \end{smallmatrix}\right)
 \] and note that \(x^{3}\) acts fixed-point-freely on \(N\). Thus, as $3$ does not divide $4=5-1$, there is no edge in the permuting graph between any non-trivial element of \(\langle x\rangle\) and any non-trivial element of \(N\). Since $\langle x\rangle$ only permutes with $\langle x^3\rangle$, the neighbours of $x$ have order $3$ or $9$ and they are contained in $H$.  Now the elements of order $3 $ in $H$ are joined to the elements of order $9$ in $H$ and to the non-trivial elements in $C_H(x)\setminus\{x,1\}$.  So at distance two from $x$ in $\Psi(G)$ we only see elements of $H$.
Now let  $w$ be the product of the standard basis elements of $N$.     We   check that no non-trivial element of $H$ is joined to $H^w$ in the permuting graph using {\sc Magma} \cite{magma}. This shows that the diameter of $\Psi(G)$ is either infinite or $6$. Since $G$ is not a Frobenius group,  Theorem \ref{Diam 6} (i) and (ii) imply that \(\mathrm{diam}(\Gamma(G))=\mathrm{diam}(\Psi(G))=6\). This example concludes the proof of Theorem~\ref{Diam 6} and Corollary~\ref{cor:permuting graphs}.

\bibliographystyle{abbrv}
\bibliography{PermGraph}
\end{document}